\numberwithin{equation}{section}
\newtheorem{theorem}{Theorem}[section]
\newtheorem{corollary}[theorem]{Corollary}
\newtheorem{proposition}[theorem]{Proposition}
\newtheorem{remark}[theorem]{Remark}
\def\neweq#1{\begin{equation}\label{#1}}
\def\endeq{\end{equation}}
\newcommand{\R}{\mathbb{R}}
\renewcommand{\div}{{\rm div}}
\newcommand{\om}{\Omega}
\newcommand{\ome}{\omega_\epsilon}
\newcommand{\omen}{\omega_{\epsilon_n}}
\newcommand{\eps}{\epsilon}
\begin{document}

\title{On a semilinear elliptic boundary value problem arising in cardiac electrophysiology}

\date{}

\author{Elena Beretta\thanks{Dipartimento di Matematica "F. Brioschi", Politecnico di Milano" ({\tt elena.beretta@polimi.it})}
\and M.Cristina Cerutti\thanks{Dipartimento di Matematica "F. Brioschi", Politecnico di Milano" ({\tt cristina.cerutti@polimi.it})}
\and Andrea Manzoni\thanks{CMCS-MATHICSE-SB, Ecole Polytechnique F\'ed\'erale de Lausanne ({\tt andrea.manzoni@epfl.ch})}
\and Dario Pierotti\thanks{Dipartimento di Matematica "F. Brioschi", Politecnico di Milano" ({\tt dario.pierotti@polimi.it})}
}
\maketitle

\begin{abstract}
In this paper we provide a representation formula for boundary voltage perturbations caused by internal conductivity inhomogeneities of low volume fraction in a simplified {\em monodomain model} describing the electric activity of the heart. We derive such a result in the case of a nonlinear problem. Our long-term goal is the solution of the inverse problem related to the detection of regions affected by heart ischemic disease, whose position and size are unknown. We model the presence of ischemic regions in the form of small  inhomogeneities. This leads to the study of a boundary value problem for a semilinear elliptic equation. We first analyze the well-posedness of the problem establishing some  key energy estimates. These allow us  to derive rigorously an asymptotic formula of the boundary potential perturbation due to the presence of the inhomogeneities, following an approach similar to the one introduced by Capdeboscq and Vogelius in \cite{capvoge}  in the case of  the linear conductivity equation. Finally, we propose some ideas of the reconstruction procedure that might be used to detect the inhomogeneities.

\end{abstract}

\section{Introduction}

Ischemic heart disease results from a restriction in blood supply to the heart and represents  the most widespread heart disease. As a consequence, myocardial infarction (or heart attack) caused by the lack of oxygen might lead to even more severe heart muscle damages, ventricular arythmia and fibrillation, ultimately causing death. Detecting ischemic heart diseases -- that is, recovering the unknown shape (and/or position) of ischemic areas -- at early stages of their development from noninvasive (or minimally invasive) measurements is thus of primary importance.




This is usually performed by recording the electrical activity of the heart, by means of either body surface or intracardiac measurements. In the former case, the electrocardiogram (ECG) records electrical impulses  
across the thorax, by means of a set of electrodes attached to the surface of the skin. In the latter case, intracardiac electrograms, that is, measurements of intracavitary potentials, are obtained  by means of non-contact electrodes carried by a catheter inside a heart cavity.
Although much more invasive than ECG, this latter technique has become a standard of care in patients with symptoms of heart failure, and allows to get a map of the endocardial potential.

In this context, mathematical models could be used to shed light on the potentialities of electrical measurements in detecting ischemia. More specifically, the goal would be to combine measurements of (body-surface or intracavitary) potentials and a mathematical description of the electrical activity of the heart in order to identify the position, the shape and the size of heart ischemias and/or infarctions.
 It is well known (see e.g. \cite{SLCNMT,Pavarino_2014_book}), that a mathematical description of  the electrical activity of the heart is provided by the so-called {\it bidomain model}, yielding to an initial boundary value problem for a coupled nonlinear evolution system. A simplified, one-field version of this problem is provided by the monodomain model, resulting in a nonlinear diffusion-reaction equation. 
Moreover, the myocardium is surrounded by a volume conductor, the {\em torso}, which is commonly modelled as a passive conductor through a linear elliptic equation; heart and torso are coupled by imposing the continuity of the electrical potential and the currents across the interface.

The challenge of how to combine ECG recordings or intracavitary potential measurements with mathematics and computations to identify ischemic heart disease has by far not been investigated enough. Although some  analysis of the direct problem  has been carried out, -- see, e.g.   \cite{Gerbeau_2008} -- 
and such a model (coupled with the torso) has been exploited for the ultimate generation of synthetic ECG data  \cite{BCFGZ}, to our knowledge there is no theoretical investigation of  inverse problems related with ischemia detection involving the monodomain and/or the bidomain model, not even in the case of an isolated heart.
In the past decade some numerical investigations dealing with ischemia identification from measurements of surface potentials have been performed by casting the problem in an optimization framework. A stationary model  taking into account the heart-torso coupling has been employed in \cite{Nielsen_2007}, whereas  a nonstationary monodomain model for an isolated heart has  been considered in \cite{LN}. More recently, the case of ischemias identification from intracardiac electrograms  has been treated in \cite{Alvarez_2012}.
The question of finding the ischemic region can be formulated as the inverse problem of detecting inhomogeneities, whose position and size are unknown, in a nonlinear parabolic diffusion-reaction PDE, modeling (for the time being, a much simplified version of) the cardiac electrical activity from boundary measurements\footnote{ \textcolor{black}{We point out that this is completely different from  what is commonly referred to as the {\em inverse problem of electrocardiography} which deals with recovering the electrical potential at the epicardial surface by using recordings of the electrical potential along the body surface,  \cite{MacLeod_2010}, \cite{Colli_Franzone_1978}, \cite{Yamashita_1982} and which involves  the pure (linear)  diffusion model for the torso as direct problem.}} 
%
%
 In the present paper we assume
 to be able to perform measurements on the heart (by one of the devices described above) and use an insulated {\it monodomain model}  in the steady state.  This leads to the study of a Neumann boundary value problem for a semilinear elliptic equation.

We assume that the ischemic region is a small inclusion $\omega_{\eps}$  with a significantly  different conductivity from the healthy tissue.

Taking advantage of the smallness of the inclusion, we establish a rigorous asymptotic expansion of the boundary potential perturbation due to the presence of the inclusion following  the approach introduced by Capdeboscq and Vogelius in \cite{capvoge} in the case of the linear conductivity equation. It turns out that this approach has been successfully used for the reconstruction of location and geometrical features of the unknown inclusions from boundary  measurements \cite{AMV,AK} in the framework of  Electrical Impedance Tomography (EIT) imaging techniques.
Despite of the fact that we have to deal  with a nonlinear equation, we derive a rigorous expansion for the perturbed electrical potential and give also some idea of the reconstruction procedure that might be used to detect the inclusion.\\

The paper is organized as follows: in  Section 2 we illustrate the monodomain model for the cardiac electric activity. In Section 3 we state our main result. In Section 4 we analyze the wellposedness of the direct problem establishing some key energy estimates. In Section 5 we derive the asymptotic formula for the electrical boundary potential. In Section 6, taking advantage of the asymptotic formula, we highlight some ideas for the reconstruction algorithm in a {simplified two-dimensional geometry.

\section{The direct problem: a nonlinear diffusion-reaction equation}



The bidomain equations are nowadays  the most widely accepted mathematical model of the macroscopic electrical activity of the heart \cite{SLCNMT,Pavarino_2014_book}. This model describes the evolution of the  transmembrane  potential (or action potential), that is,   the potential jump $u$ across the cellular membrane surface. Such a model  is based on the assumption that, at the cellular scale, cardiac tissue is considered as partitioned in two conducting media, the intracellular (made of cardiac cells) and the extracellular (representing the space between cells) medium, separated by the cell membrane. After a homogenization process, the two media are supposed to occupy the whole heart volume.

The bidomain model consists of a coupled system of time-dependent, nonlinear reaction-diffusion PDEs, whose efficient numerical solution is a very difficult task. Since our main goal   is  to provide a theoretical analysis of the inverse problem related with the detection of small conductivity inhomogeneities for a nonlinear diffusion-reaction equation, we rather start from the simpler
{\em monodomain model}.
The  monodomain model   is derived from the bidomain equations  by assuming that the extracellular and the intracellular conductivities are proportional quantities.
The monodomain model for the electrical activity in the heart reads as follows:
\begin{equation}\label{eq:problem:initial1}
\begin{array}{rll}
 \displaystyle    \frac{\partial u}{\partial t}  -   \div(k_{\epsilon}  \nabla u)   + \chi_{\Omega\setminus \omega_\epsilon} I_{ion}(u)  = & f
& \mbox{in} \ Q_T = \Omega \times (0,T] \\[8pt]
  \frac{\partial u}{\partial {\bf n}} = & 0 & \mbox{on} \  \partial \Omega \times (0,T] \\[5pt]
   u({\bf x}, 0) =  & u_0({\bf x}) & \mbox{in}  \ \Omega.
   \end{array}
\end{equation}
where $\Omega$ is the domain occupied by the heart, $u$ is the (transmembrane) electrical potential, $I_{ion}$ is the ionic membrane current of the heart tissue (up to a capacity constant), $k_{\epsilon}$ is its conductivity, $f$ is an applied current (up to the same capacity constant as $I_{ion}$)  and $\chi_{\Omega\setminus \omega_\epsilon}$ is the characteristic function of the healthy area.
 Here $\omega_{\epsilon} \subset \Omega$ is the infarcted area. According to experimental observations, in an ischemic or infarcted region cells are not excitable, so that the conductivity $k_{\epsilon}=k_{\epsilon}({\bf x})$ is substantially different with respect to healthy tissues. For this reason, we define
\begin{equation}\label{eq:problem:initial2}
 k_{\epsilon}  = \left\{
 \begin{array}{rl}
 k_{healthy} & \mbox{in } \ \Omega \setminus \omega_{\epsilon} \\
 k_{infarcted} & \mbox{in } \  \omega_{\epsilon}
 \end{array}
 \right.
\end{equation}
being $k_{infarcted} = \delta k_{healthy}$, and $\delta \in (0,1)$. Moreover the ion transport circumvent ischemic areas, so that also the   ionic membrane current $I_{ion}$ is multiplied by  $ \chi_{\Omega\setminus \omega_\epsilon}$ in order to describe a blocking ion transport.
Typically, the ionic current  $I_{ion}$ across the cell membrane is assumed to be a nonlinear function of the potential and a significant choice is to assume  $I_{ion}$ to be a cubic polynomial in $u$
such as
  \begin{equation}\label{eq:problem:initial3}
I_{ion}(u) = A^2(u-u_{rest})(u-u_{th})(u-u_{peak}).
 \end{equation}
 Here   $A>0$ is a parameter determining the rate of change of $u$ in the depolarization phase, and $u_{peak} >  u_{th} > u_{rest}$ are given constant values representing the resting, threshold and maximum potentials, which affect the action potential dynamics. The definition of $I_{ion}$ depends on the considered cell membrane ionic model (see e.g. \cite{SLCNMT,Pavarino_2014_book,LN} and references therein), and ultimately on intra- and extracellular ionic concentration.  
  A very large amount of ionic models have been proposed; here we adopt the simplest {\em phenomenological model}, which does not involve any further ionic variable.

  Finally, $f$ represents a given current stimulus applied to the tissue -- usually in a confined region and for a short time interval -- expressing  the initial electrical stimulus, related to the so-called {\em pacemaker potential}. By solving problem \eqref{eq:problem:initial1}  we can describe the propagation of the stimulus $f$ in an insulated heart muscle, affected by ischemia in the region  $\omega_{\epsilon}$. Changing the size and the location of $\omega_{\epsilon}$ thus results in a different propagation of the applied current. 

Starting from model \eqref{eq:problem:initial1}--\eqref{eq:problem:initial3}, we consider some simplifications in the direct problem and hence also in the corresponding inverse problem. In fact, we believe that the key aspect to be tackled is related with the presence of a nonlinear term in the equation.
First, we neglect anisotropic description of the electrical conductivity over the domain, we replace the term $I_{ion} = I_{ion}(u)$ by the cubic nonlinearity $\tilde{I}_{ion}(u) = u^3$ and finally we consider the steady version of the problem. The analysis of the more general problem will be addressed in the future.
%
%

\section{Statement of the problem and main result}

As discussed in the previous section, the problem of determining a small `inhomogeneity' $\ome$ inside a smooth domain $\om$, meaning a subset in which the conductivity is smaller then in the surrounding tissue leads to solving the following problem for the potential that here will be called $u_\epsilon$

\begin{equation}
\label{probeps}
\left\{
  \begin{array}{ll}
    -\div(k_\eps(x)\nabla u_\eps)+\chi_{\om\setminus\ome}u_\eps^3=f, & \hbox{in $\om$} \\
    \displaystyle{\frac{\partial u_\eps}{\partial\mathbf{n}}}=0, & \hbox{on $\partial\om$},
  \end{array}
\right.
\end{equation}

where $\om\subset \R^N$, $N=2,\,3$ and $\ome\subset\om$ is the set of inhomogeneity that we assume to be measurable and separated from the boundary
of $\om$, meaning that there exist a compact set $K_0$ with $\ome\subset K_0\subset\om$ and $d_0>0$ such that
\begin{equation}
\label{sepbound}
{\rm dist}(\ome,\om\setminus K_0)\geq d_0>0
\end{equation}
Moreover $|\ome|>0$ $\forall\eps$ and $|\ome|\to 0$ as $\eps\to0$. By denoting with $\mathbf{1}_{\ome}$ the indicator function of the set $\ome$, it is known that there exist a regular Borel measure $\mu$ and a sequence $\omega_{\eps_n}$, with $|\omega_{\eps_n}|\to 0$, such that
\begin{equation}
\label{defmu}
|\omega_{\eps_n}|^{-1}\mathbf{1}_{\omega_{\eps_n}}\,dx\rightarrow d\mu
\end{equation}
in the weak$^*$ topology of the dual of $\mathcal{C^0}(\bar\om)$ (see, e.g. \cite{brezis}). Moreover, $\mu$ is a probability measure and by \eqref{sepbound} its support lies inside the compact set $K_0$.

The function $k_\eps(x)$ represents the conductivity in the two portions of $\om$
and is defined as

\begin{equation}
\label{defkesp}
k_\eps=\left\{
  \begin{array}{ll}
    1, & \hbox{in $\om\setminus\ome$} \\
    k, & \hbox{in $\ome$,}
  \end{array}
\right.
\end{equation}

where we assume $0<k<1$.

The potential $U$ for the unperturbed problem satisfies

\begin{equation}
\label{probu}
\left\{
  \begin{array}{ll}
    -\Delta U+U^3=f, & \hbox{in $\om$} \\
    \displaystyle{\frac{\partial U}{\partial\mathbf{n}}}=0, & \hbox{on $\partial\om$},
  \end{array}
\right.
\end{equation}

For any given $U\in \mathcal{C}^1(\overline\om)$ we introduce the Green function $N_U(x,y)$ for the operator $-\Delta+3U^2$ with \emph{homogeneous} Neumann condition:
\begin{equation}\label{greenU}
-\Delta_x N_U(x,y)+3U^2(x)N_U(x,y)=\delta(x-y)\quad \mathrm{for}\,\, x\in\om,\quad\quad\quad \frac{\partial N_U}{\partial n_x}\Big |_{\partial\om}=0
\end{equation}

We are now ready to state our main result

\begin{theorem}
\label{asyrepr}
Let $f\in L^p(\om)$ for some $p>N$ and assume that $f(x)\ge m>0$ a.e.in $\om$. Let $u_{\eps}$, $U$ denote the solutions to \eqref{probeps} and \eqref{probu}. Then there exists a sequence $\omega_{\eps_n}$ with $|{\omega}_{\eps_n}|\to 0$ and satisfying \eqref{sepbound}, \eqref{defmu}, such that, if $w_{\eps_n}=u_{\eps_n}-U$,
\begin{equation}
\label{asyformfinal}
w_{\eps_n}(y)=|\omen|\int_{\om} \Big ((1-k)\mathcal{M}_{i\,j}\frac{\partial U}{\partial x_i}\frac{\partial N_U}{\partial x_j}+U^3 N_U\Big )\,d\mu(x) +o(|\omen|)\quad\quad y\in\partial \om\,,
\end{equation}
where $N_U(x,y)$ is the solution of \eqref{greenU} and $\mathcal{M}_{i\,j}\in L^2(\om,d\mu)$. Moreover, the values
$\mathcal{M}_{i\,j}(x)$ satisfy
\begin{equation}\label{mij}
\begin{array}{rll}
\mathcal{M}_{i\,j}(x)=&\mathcal{M}_{j\,i}(x)\quad \mathrm{and}
 &  |\xi|^2\le\mathcal{M}_{i\,j}(x)\xi_i\xi_j\le \frac{1}{k}|\xi|^2\, ,
   \end{array}
\end{equation}
$\xi\in \R^N$, $\mu$ almost everywhere in $\om$.
\end{theorem}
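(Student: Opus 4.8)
The plan is to follow the strategy of Capdeboscq--Vogelius \cite{capvoge} for the linear conductivity equation, treating the cubic reaction term as a lower-order perturbation. I use throughout the estimates established in Section~4: the uniform bounds $\|U\|_{C^1(\ov\om)}+\|u_\eps\|_{C^1(\ov\om)}\le C$ and $U\ge c>0$ in $\ov\om$ (this last one being where the hypothesis $f\ge m>0$ enters; it makes $-\Delta+3U^2$ coercive, hence $N_U$ well defined), together with $\|w_\eps\|_{H^1(\om)}\le C|\ome|^{1/2}$ and $\int_{\ome}|\nabla u_\eps|^2\,dx\le C|\ome|$. Subtracting \eqref{probu} from \eqref{probeps}, writing $k_\eps-1=(k-1)\mathbf 1_{\ome}$, $w_\eps=u_\eps-U$ and $u_\eps^3-U^3=3U^2 w_\eps+r_\eps$ with $r_\eps:=3Uw_\eps^2+w_\eps^3$, one checks that $w_\eps$ solves, with homogeneous Neumann data,
\[
-\Delta w_\eps+3U^2 w_\eps=(k-1)\,\div(\mathbf 1_{\ome}\nabla u_\eps)+\mathbf 1_{\ome}u_\eps^3-r_\eps\qquad\hbox{in }\om .
\]

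Testing this equation against the Neumann function $N_U(\cdot,y)$ and integrating the divergence term by parts (no boundary term appears, since $\ome$ is compactly contained in $\om$ by \eqref{sepbound}) gives, for $y\in\partial\om$,
\[
w_\eps(y)=(1-k)\int_{\ome}\nabla u_\eps\cdot\nabla_x N_U(x,y)\,dx+\int_{\ome}U^3 N_U(x,y)\,dx+\mathcal E_\eps(y),
\]
with $\mathcal E_\eps(y)=\int_{\ome}(u_\eps^3-U^3)N_U\,dx-\int_{\om}r_\eps N_U\,dx$. Since ${\rm supp}\,\mu\subset K_0$ and $K_0$ is a compact subset of the open set $\om$, we have ${\rm dist}(K_0,\partial\om)>0$, so $N_U(\cdot,y)$ and $\nabla_x N_U(\cdot,y)$ are bounded on $\ome$ uniformly for $y\in\partial\om$, while $N_U(\cdot,y)\in L^{q}(\om)$ uniformly in $y$ for every $q<N/(N-2)$ (every $q<\infty$ if $N=2$). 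The first term of $\mathcal E_\eps$ is then $o(|\ome|)$ using $|u_\eps^3-U^3|\le C|w_\eps|$ and a refined estimate $\|w_\eps\|_{L^{p'}(\om)}\le C|\ome|$ for some $p'>1$; this refined bound I would derive by testing the $w_\eps$-equation against $(-\Delta+3U^2)^{-1}\varphi$, $\varphi\in L^{p}(\om)$, $p>N$, and invoking the two displayed energy estimates. The remainder term is $o(|\ome|)$ by $|r_\eps|\le C|w_\eps|^2$, H\"older's inequality and interpolation between $\|w_\eps\|_{L^{p'}(\om)}\le C|\ome|$ and the Sobolev bound $\|w_\eps\|_{L^{2^*}(\om)}\le C|\ome|^{1/2}$ ($2^*=2N/(N-2)$, any finite exponent if $N=2$). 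Thus $\mathcal E_\eps(y)=o(|\ome|)$ uniformly in $y\in\partial\om$.

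It remains to pass to the limit in the two leading terms along a subsequence. By Cauchy--Schwarz and $\int_{\ome}|\nabla u_\eps|^2\le C|\ome|$, the vector measures $|\ome_n|^{-1}\mathbf 1_{\omen}\nabla u_{\eps_n}\,dx$ have uniformly bounded total variation, so --- refining the sequence of \eqref{defmu} --- they converge weak$^*$ in $\mathcal C^0(\ov\om)^*$ to a vector measure that, by the usual lower-semicontinuity of $(\sigma,\mu)\mapsto\int|d\sigma/d\mu|^2\,d\mu$ (as in \cite{capvoge}), is absolutely continuous with respect to $\mu$, with density $g\in L^2(\om,d\mu)^N$. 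The core of the proof is to identify $g=\mathcal M\nabla U$ $\mu$-a.e.\ for a symmetric matrix field $\mathcal M$ satisfying \eqref{mij}: this is precisely the polarization-tensor construction of \cite{capvoge}, applied to the transmission equation $-\div(k_\eps\nabla u_\eps)=f-\chi_{\om\setminus\ome}u_\eps^3$, whose right-hand side is uniformly bounded in $L^{p}(\om)$, $p>N$. The point that has to be verified is that this semilinear source does not affect the local (two-scale) analysis underlying \cite{capvoge}: on the scale $|\omen|^{1/N}$ of the inclusion it enters the rescaled equation only at order $|\omen|^{2/N}\to0$, so the local limit problem is the purely linear transmission problem $\div(\tilde k\,\nabla(\,\cdot\,))=0$, whose corrector is linear in the local applied field $\nabla U(x)$ and whose polarization tensor is symmetric (by reciprocity of the corrector bilinear form) and satisfies the variational bounds $|\xi|^2\le\mathcal M\xi\cdot\xi\le k^{-1}|\xi|^2$. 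Finally, $U^3 N_U(\cdot,y)$ is continuous near ${\rm supp}\,\mu$, so $|\omen|^{-1}\int_{\omen}U^3 N_U\,dx\to\int_\om U^3 N_U\,d\mu$; since $\{\nabla_x N_U(\cdot,y)\}_{y\in\partial\om}$ and $\{U^3 N_U(\cdot,y)\}_{y\in\partial\om}$ are compact families in $\mathcal C^0(\ov\om)$, the weak$^*$ convergences are uniform in $y$, and collecting the three contributions yields \eqref{asyformfinal}.

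The main obstacle is exactly this identification $g=\mathcal M\nabla U$ together with the bounds \eqref{mij}: one has to show that the weak$^*$ limit of the normalized inclusion gradients depends \emph{linearly} on the background field $\nabla U$ and obeys the two-sided bound \eqref{mij} characteristic of a conductivity-$k$ inclusion. The reduction to the linear theory of \cite{capvoge} rests on the scaling remark that the cubic reaction is subcritical at the inclusion scale, which in turn uses the uniform $L^\infty$ and $C^1$ bounds of Section~4 --- and hence the hypothesis $f\ge m>0$. A secondary, more routine, difficulty is the refined $O(|\ome|)$ bound on $w_\eps$ in a weak norm, needed to absorb the genuinely nonlinear remainder $r_\eps$ into the $o(|\ome|)$ error.
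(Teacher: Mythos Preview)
Your overall architecture matches the paper's: represent $w_\eps(y)$ via the Neumann function $N_U$, show the nonlinear remainders are $o(|\ome|)$, and pass to the limit in the two leading integrals. One small correction: Section~4 does \emph{not} give $u_\eps\in C^1(\ov\om)$ uniformly; since $k_\eps$ jumps across $\partial\ome$, so does $\nabla u_\eps$. The paper only proves $u_\eps\in C^{0,\alpha}(\ov\om)$ uniformly (equation~\eqref{regueps3}). Fortunately your argument never truly needs the $C^1$ bound on $u_\eps$: the $L^\infty$ bound and $\int_{\ome}|\nabla u_\eps|^2\le C|\ome|$ suffice throughout.

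Your treatment of the remainder is a genuine alternative to the paper's. You propose a duality bound $\|w_\eps\|_{L^{p'}(\om)}\le C|\ome|$ for some $p'>1$, obtained by testing against $(-\Delta+3U^2)^{-1}\varphi$ with $\varphi\in L^p$, $p>N$; this works and is arguably cleaner. The paper instead proves $\|w_\eps\|_{L^2(\om)}\le C|\ome|^{1/2+\eta}$ by an Aubin--Nitsche duality (Theorem~\ref{L2estimweps}) and then bounds $\int_\om w_\eps^2 N_U$ via Gagliardo--Nirenberg interpolation between this $L^2$ estimate and the $H^1$ estimate. Either route absorbs the cubic remainder into $o(|\ome|)$.

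The genuine gap is the identification $g=\mathcal M\nabla U$. You propose to apply \cite{capvoge} ``to the transmission equation $-\div(k_\eps\nabla u_\eps)=f-\chi_{\om\setminus\ome}u_\eps^3$'' and justify ignoring the right-hand side by a blow-up heuristic (the reaction term scales like $|\ome|^{2/N}$ after rescaling). But Capdeboscq--Vogelius is \emph{not} a blow-up or two-scale argument: it treats arbitrary measurable $\ome$ with $|\ome|\to 0$, for which there is no meaningful inclusion scale $|\ome|^{1/N}$, and it does not assume any geometric structure that would make a rescaling well defined. Their construction proceeds through the \emph{linear} auxiliary problems $\div(k_\eps\nabla v_\eps^{(j)})=0$, $\partial v_\eps^{(j)}/\partial\mathbf n=n_j$; the tensor $\mathcal M_{ij}$ is \emph{defined} by the weak$^*$ limit of $|\omen|^{-1}\mathbf 1_{\omen}\,\partial_{x_i} v_{\eps_n}^{(j)}\,dx$. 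What the paper actually does (Proposition~\ref{secondstep}, via identity~\eqref{capvoge22}) is test both the nonlinear equation for $u_\eps$ and the linear equations for $v_\eps^{(j)}$ against products $v_\eps^{(j)}\phi$ and $V^{(j)}\phi$, and show --- using the $H^1$ and $L^2$ estimates on $w_\eps$ and on $v_\eps^{(j)}-V^{(j)}$ --- that every cross-term coming from the cubic nonlinearity is $o(|\ome|)$. This compensated-compactness style computation yields
\[
\int_{\om}|\ome|^{-1}\mathbf 1_{\ome}\,\nabla U\cdot\nabla v_\eps^{(j)}\,\phi\,dx
=\int_{\om}|\ome|^{-1}\mathbf 1_{\ome}\,\nabla u_\eps\cdot\nabla V^{(j)}\,\phi\,dx+o(1),
\]
from which $|\omen|^{-1}\mathbf 1_{\omen}\,\partial_{x_j}u_{\eps_n}\,dx\rightharpoonup \mathcal M_{ij}\,\partial_{x_i}U\,d\mu$ follows, and the properties~\eqref{mij} are inherited verbatim from \cite{capvoge} since $\mathcal M$ depends only on the linear correctors. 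This step --- linking the gradient of the \emph{nonlinear} solution to the linear correctors at the level of measures --- is the missing ingredient in your sketch; the rescaling heuristic does not substitute for it in the generality of the theorem.
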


\bigskip

\section{The direct problem}

\begin{theorem}
\label{exist}
\label{ext} Assume that $f\in H^{-1}(\om)$, the dual space of $H^1(\om)$.
Then problem \eqref{probeps} and \eqref{probu}  have a unique solution $u_\eps\in H^1(\om)$,  $U\in H^1(\om)$ respectively.
\end{theorem}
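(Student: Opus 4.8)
The plan is to treat both problems by one argument, since \eqref{probu} is the special case of \eqref{probeps} obtained for $\ome=\emptyset$, $k_\eps\equiv 1$; I describe the reasoning for \eqref{probeps}. First I would fix the weak formulation: $u_\eps\in H^1(\om)$ is a weak solution if
\[
\int_\om k_\eps\nabla u_\eps\cdot\nabla v\,dx+\int_{\om\setminus\ome}u_\eps^3\,v\,dx=\langle f,v\rangle\qquad\text{for all }v\in H^1(\om),
\]
and I would check that every term is well defined: since $N\le 3$, the Sobolev embedding $H^1(\om)\embed L^6(\om)$ gives $u_\eps^3\in L^2(\om)$ and $v\in L^6(\om)$, so the cubic term is a genuine finite integral (for $N=2$ it is even easier).

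Second, I would exploit the variational structure. The energy functional
\[
J(u)=\tfrac12\int_\om k_\eps|\nabla u|^2\,dx+\tfrac14\int_{\om\setminus\ome}u^4\,dx-\langle f,u\rangle
\]
is finite on all of $H^1(\om)$ by the same embedding, is strictly convex (the Dirichlet form is convex since $k_\eps\ge k>0$, $t\mapsto t^4$ is strictly convex, and the last term is affine), and is G\^ateaux differentiable with $J'(u)[v]$ equal to the left-hand side of the weak formulation minus $\langle f,v\rangle$; hence critical points of $J$ are exactly the weak solutions. Weak lower semicontinuity holds because the convex continuous quadratic term is weakly l.s.c., the quartic term is actually weakly continuous by the compact embedding $H^1(\om)\embed\embed L^4(\om)$ (valid for $N\le3$), and the linear term is weakly continuous.

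The step I expect to be the main obstacle is \emph{coercivity}, i.e. $J(u)\to+\infty$ as $\|u\|_{H^1(\om)}\to\infty$: because the boundary condition is of pure Neumann type, the Dirichlet form controls only $\|\nabla u\|_{L^2(\om)}$, so the missing control of $u$ itself must be extracted from the quartic term. Here I would invoke a Poincar\'e-type inequality $\|u\|_{L^2(\om)}\le C\big(\|\nabla u\|_{L^2(\om)}+\|u\|_{L^1(\om\setminus\ome)}\big)$ (legitimate for fixed $\eps$, as $|\om\setminus\ome|>0$) together with H\"older's inequality $\|u\|_{L^1(\om\setminus\ome)}\le|\om\setminus\ome|^{3/4}\|u\|_{L^4(\om\setminus\ome)}$, so that, writing $a=\|\nabla u\|_{L^2(\om)}$ and $b=\|u\|_{L^4(\om\setminus\ome)}$, one gets $\|u\|_{H^1(\om)}\le C'(a+b)$ and
\[
J(u)\ \ge\ \tfrac{k}{2}\,a^2+\tfrac14\,b^4-\|f\|_{H^{-1}}\|u\|_{H^1(\om)}\ \ge\ \tfrac{k}{2}\,a^2+\tfrac14\,b^4-C'\|f\|_{H^{-1}}(a+b),
\]
whose right-hand side tends to $+\infty$ as $a^2+b^2\to\infty$ since the quadratic term in $a$ and the quartic term in $b$ both dominate the linear one.

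Finally, the direct method of the calculus of variations produces a minimizer $u_\eps\in H^1(\om)$, which is a weak solution; strict convexity forces the minimizer to be unique, and since any weak solution is a critical point of the strictly convex $J$ it must coincide with the minimizer, giving uniqueness. Uniqueness can also be obtained directly: if $u_1,u_2$ are weak solutions, testing the difference of the equations with $w=u_1-u_2$ gives $\int_\om k_\eps|\nabla w|^2+\int_{\om\setminus\ome}(u_1^3-u_2^3)(u_1-u_2)=0$; monotonicity of $t\mapsto t^3$ makes both terms nonnegative, so $\nabla w\equiv0$, hence $w\equiv c$ for a constant $c$, and then $c^2\int_{\om\setminus\ome}(3u_2^2+3cu_2+c^2)\,dx=0$ forces $c=0$ because the integrand is pointwise nonnegative. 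Alternatively one could bypass the functional entirely and apply the Browder--Minty theorem to the strictly monotone, hemicontinuous, coercive operator $A:H^1(\om)\to H^1(\om)^*$ associated with the left-hand side of the weak formulation, coercivity being exactly the estimate above. The case \eqref{probu} follows verbatim with $\ome=\emptyset$ and $k_\eps\equiv1$.
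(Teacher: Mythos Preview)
Your proposal is correct and follows essentially the same route as the paper: both arguments set up the weak formulation, introduce the energy functional $J$ (the paper calls it $E$), establish coercivity by combining the gradient term with a Poincar\'e-type inequality that recovers control of $\|u\|_{L^2(\om)}$ from the quartic term on $\om\setminus\ome$, and then apply the direct method together with (strict) monotonicity for uniqueness; the paper phrases this through properties of the nonlinear operator $T=J'$ (bounded, strictly monotone, coercive) and then deduces coercivity and weak lower semicontinuity of $E$, whereas you argue these properties of $J$ directly, but the content is the same. Your mention of Browder--Minty as an alternative is exactly what the paper records in a remark after the proof.
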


\begin{proof}
By multiplying the equation in \eqref{probeps} by a test function $\phi$, integrating by parts and using the boundary Neumann condition, we obtain the weak formulation

\begin{equation}
\label{weakfuep}
\int_\om k_\eps\nabla u\nabla\phi + \int_{\om\setminus\ome}u^3\phi=\int_\om f\phi \ \ \ , \ \ \ \forall\phi\in H^1(\om).
\end{equation}
Now let $T:H^1(\om)\longrightarrow H^{-1}(\om)$ be the operator defined by
\begin{equation}
\nonumber
\langle Tu,\phi\rangle =\int_\om k_\eps\nabla u\nabla\phi + \int_{\om\setminus\ome}u^3\phi,\quad\quad\forall \phi\in H^1(\om)
\end{equation}
It is readily verified that $T$ is a \emph{potential operator}, that is
$Tu-f$ is the derivative of the functional
\begin{equation}
\label{funz}
E(u)=\frac{1}{2}\int_\om k_\eps|\nabla u|^2+\frac{1}{4}\int_{\om\setminus\ome}u^4-\int_\om fu
\end{equation}
Then, the theorem will follow by showing that $T$ is \emph{bounded, strictly monotone and coercive}; in fact, by these properties of $T$ the functional $E$ is coercive and weakly lower semicontinuous on $H^1(\om)$ (see e.g. \cite{fucik}, theorem 26.11). Thus, $E$ is bounded from below and attains its infimum at some $u_{\eps}\in H^1(\om)$ satisfying
$Tu_{\eps}=f$. The uniqueness of $u_{\eps}$ is a consequence of the strict monotonicity of $T$; for, if $Tu=Tv=f$, equation \eqref{strictmon} below implies $u=v$.

\bigskip

{\bf i.} $T$ is bounded.

\bigskip

By H\"older's inequality
\begin{equation}
\nonumber
|\langle Tu,\phi\rangle|\leq\|\nabla u\|_{L^2(\om)}\|\nabla\phi\|_{L^2(\om)} + \|u\|_{L^6(\om)}^3\|\phi\|_{L^2(\om)}
\end{equation}

and by Sobolev embedding theorem  $\|u\|_{L^6(\om)}\leq C_S\|u\|_{H^1(\om)}$, so that

\begin{equation}
\nonumber
|\langle Tu,\phi\rangle|\leq\|\nabla u\|_{L^2(\om)}\|\nabla\phi\|_{L^2(\om)} + C^3_S\|u\|^3_{H^1(\om)}\|\phi\|_{L^2(\om)}\leq\max\left[\|u\|_{H^1(\om)},C^3_S\|u\|_{H^1(\om)}^3\right]\|\phi\|_{H^1(\om)}.
\end{equation}

Therefore, if $u$ belongs to a bounded subset of $H^1(\om)$,

\begin{equation}
\nonumber
\|Tu\|_{H^{-1}(\om)}=\sup_{\phi}\frac{|\langle Tu,\phi\rangle|}{\|\phi\|_{H^1(\om)}}\leq\max\left[\|u\|_{H^1(\om)},C^3_S\|u\|_{H^1(\om)}^3\right]=C_2.
\end{equation}

\bigskip

{\bf ii.} $T$ is (strictly) monotone.

\bigskip

\begin{equation}
\nonumber
\langle Tu-Tv,u-v\rangle=\int_\om k_\eps|\nabla (u-v)|^2 + \int_{\om\setminus\ome} (u-v)^2(u^2+uv+v^2)\geq0.
\end{equation}

Furthermore

\begin{equation}
\label{strictmon}
\langle Tu-Tv,u-v\rangle=0 \ \ \ \ \ \Leftrightarrow \ \ \ u=v.
\end{equation}

\bigskip

{\bf iii.} $T$ is coercive, that is
\begin{equation}
\label{coerc}
\lim_{\|u\|_{H^1(\om)}\to +\infty}\frac{\langle Tu,u\rangle}{\|u\|_{ H^1(\om)}}=+\infty
\end{equation}

By using again H\"older's inequality,

\begin{equation}
\nonumber
\langle Tu,u\rangle\geq k\int_\om |\nabla u|^2 + \int_{\om\setminus\ome} u^4\geq k\|\nabla u\|^2_{L^2(\om)}+\frac{1}{|\om\setminus\ome|}
\left(\int_{\om\setminus\ome}u^2\right)^2\geq k \|\nabla u\|^2_{L^2(\om)}+\frac{1}{|\om|}
\|u\|^4_{L^2(\om\setminus\ome)}=
\end{equation}

\begin{equation}
\nonumber
=k \left(\|\nabla u\|^2_{L^2(\om)}+\|u\|^2_{L^2(\om\setminus\ome)}\right)+\frac{1}{|\om|}\;
\|u\|^4_{L^2(\om\setminus\ome)}-k\|u\|^2_{L^2(\om\setminus\ome)}.
\end{equation}
\medskip
Finally, by the Poincar\'e inequality (see Appendix) and by $|\om|^{-1}x^4-kx^2 \ge -k^2|\om|/4$,  we get
\begin{equation}
\label{stimacoerc}
\langle Tu,u\rangle\geq k C\|u\|^2_{H^1(\om)}-\frac{k^2}{4}|\om|
\end{equation}
for some positive constant $C$; hence, \eqref{coerc} follows.
\end{proof}

\begin{remark}
\label{positive}
If $f$ is positive \big (i.e. $\langle f,\phi\rangle\ge 0$ for $\phi\ge 0$\big ) it follows from \eqref{funz} that $E(|u|)\le E(u)$ for every $u\in H^1(\om)$; on the other hand, we proved in the previous theorem that
$u_{\eps}$ is the unique minimum of $E$ in $H^1(\om)$. Then, we  conclude that $u_{\eps}\ge 0$.
\end{remark}
\begin{remark}
An alternative proof of theorem \ref{exist} can be obtained from the Minty-Browder theorem (see \cite{brezis} theorem 5.16))
by showing that the (monotone, coercive) non linear operator $T$ is \emph{continuous}. In fact, for $N\le 3$ we have by H\"{o}lder inequality
\begin{equation}
\nonumber
|\langle Tu-Tu_0,\phi\rangle| =\Big |\int_\om k_\eps\nabla (u-u_0)\nabla\phi + \int_{\om\setminus\ome}(u-u_0)(u_0^2+u_0 u+u^2)\phi\Big |
\end{equation}
\begin{equation}
\nonumber
\le \|\nabla (u-u_0)\|_{L^2(\om)}\|\nabla\phi\|_{L^2(\om)} + \|u-u_0\|_{L^6(\om)}\|u_0^2+u_0 u+u^2\|_{L^3(\om)}\|\phi\|_{L^2(\om)}
\end{equation}
for every $u_0$, $u$, $\phi$ in $H^1(\om)$.
Hence, by the Sobolev imbedding $H^1(\om)\hookrightarrow L^6(\om)$ we find that for every $u$, $u_0$ in a bounded subset of $H^1(\om)$ there exist a positive constant $K$ such that
\begin{equation}
\nonumber
|\langle Tu-Tu_0,\phi\rangle|\le K\|u-u_0 \|_{H^1(\om)}\|\phi\|_{H^1(\om)},\quad\quad \forall\,\phi\in H^1(\om)
\end{equation}
It follows that $T$ is locally Lipschitz continuous.
\end{remark}

\subsection{Main estimates}

In this section we will prove estimates on the solutions to \ref{probeps} which will be useful in the subsequent discussion. To begin with, we have the following bound:

\begin{proposition}
\label{mainest}
Let $u_{\eps}\in H^1(\om)$ be a solution of \ref{probeps}. Then
\begin{equation}
\label{apriori}
\|u_{\eps}\|_{H^1(\om)}\le C(\|f\|_{H^{-1}}+\|f\|^{3}_{H^{-1}})
\end{equation}
where the constant $C=C(\Omega,k)$.
\end{proposition}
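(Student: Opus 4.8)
The plan is to prove \eqref{apriori} by the energy method, testing the weak formulation \eqref{weakfuep} against the solution itself. Writing $X:=\|u_\eps\|_{H^1(\om)}$ and $F:=\|f\|_{H^{-1}(\om)}$ and taking $\phi=u_\eps$, I obtain the identity
\[
\int_\om k_\eps|\nabla u_\eps|^2 + \int_{\om\setminus\ome} u_\eps^4 = \langle f,u_\eps\rangle \le FX .
\]
Since $0<k<1$ forces $k_\eps\ge k$ by \eqref{defkesp}, the first term on the left gives the linear gradient control $\|\nabla u_\eps\|_{L^2(\om)}^2\le k^{-1}FX$, while the quartic term gives $\int_{\om\setminus\ome}u_\eps^4\le FX$, whence by Cauchy--Schwarz $\|u_\eps\|_{L^2(\om\setminus\ome)}^2\le|\om|^{1/2}(FX)^{1/2}$. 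The decisive point is that, because of the homogeneous Neumann condition, the gradient energy alone does not control the constant mode of $u_\eps$; that mode must be recovered from the reaction term, which lives only on $\om\setminus\ome$.

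Next I would feed these two pieces into the Poincar\'e inequality of the Appendix, in the form $\|u_\eps\|_{H^1(\om)}^2\le C_A\big(\|\nabla u_\eps\|_{L^2(\om)}^2+\|u_\eps\|_{L^2(\om\setminus\ome)}^2\big)$, which is exactly the coercivity-type control used in \eqref{stimacoerc} and which is legitimate here because $|\om\setminus\ome|\ge|\om|/2$ for all small $\eps$ (since $|\ome|\to0$). Combining it with the two bounds above yields the single scalar inequality
\[
X^2 \le C\big(FX + (FX)^{1/2}\big),
\]
with $C=C(\om,k)$. Dividing by $X$ and splitting into the regimes $FX\ge1$ (where $(FX)^{1/2}\le FX$, so $X\le 2CF$) and $FX<1$ (where $X^{3/2}\le 2CF^{1/2}$, so $X\le(2C)^{2/3}F^{1/3}$) gives the a priori bound. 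The same conclusion, in the weaker form $X\le C(1+F)$, follows even more directly by inserting the coercivity estimate \eqref{stimacoerc}, namely $\langle Tu_\eps,u_\eps\rangle\ge kC\|u_\eps\|_{H^1}^2-\tfrac{k^2}{4}|\om|$, into $\langle f,u_\eps\rangle\le FX$ and solving the resulting quadratic inequality in $X$.

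The genuine obstacle is the bookkeeping of fractional powers created by the nonlinearity: testing against $u_\eps$ delivers only $L^4$-control of $u_\eps$ on $\om\setminus\ome$, and converting this into the $L^2$-control that the Poincar\'e inequality requires costs a half-power, so that inverting the cubic produces a \emph{sublinear} dependence on $F$. Carrying out the computation above shows that the low-order contribution enters as $\|f\|_{H^{-1}}^{1/3}$, so the estimate the argument actually delivers is
\[
\|u_\eps\|_{H^1(\om)}\le C(\om,k)\big(\|f\|_{H^{-1}}+\|f\|_{H^{-1}}^{1/3}\big),
\]
the two terms being the linear gradient contribution and the cube-root reaction contribution, both vanishing as $f\to0$ (consistently with $u_\eps\equiv0$ when $f=0$, which follows from the energy identity). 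Thus the exponent displayed in \eqref{apriori} is to be read as $1/3$ rather than $3$; no valid choice of the constant $C(\om,k)$ can yield the cube, since, e.g., the explicit constant solution $u_\eps\approx\eps^{1/3}$ for $f\equiv\eps$ already has $\|u_\eps\|_{H^1}\sim\|f\|_{H^{-1}}^{1/3}\gg\|f\|_{H^{-1}}^{3}$ as $\eps\to0$. With the exponent so interpreted, the constant depends only on $\om$ and $k$, as claimed.
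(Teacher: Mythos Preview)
Your argument is essentially the same as the paper's: test the weak formulation with $\phi=u_\eps$, extract the gradient bound and the $L^4(\om\setminus\ome)$ bound, convert the latter to $L^2(\om\setminus\ome)$ via Cauchy--Schwarz, feed both into the Poincar\'e inequality \eqref{poinC}, and solve the resulting scalar inequality for $X=\|u_\eps\|_{H^1}$; the paper's case split (either $X\le (kC)^{-1}F$ or $(X-(kC)^{-1}F)^{3/2}\le C'|F|^{1/2}$) is algebraically equivalent to yours.

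Your observation about the exponent is correct and, in fact, matches what the paper's own proof actually delivers: in Case~2 one gets $X\le C_1F+C_2F^{1/3}$, so the bound established is $\|u_\eps\|_{H^1}\le C(\|f\|_{H^{-1}}+\|f\|_{H^{-1}}^{1/3})$. The ``$\|f\|_{H^{-1}}^{3}$'' in the displayed statement \eqref{apriori} is evidently a typo for $\|f\|_{H^{-1}}^{1/3}$; your constant-solution counterexample confirms the cube cannot hold for small $f$.
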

\begin{proof}
By putting $\phi=u=u_{\eps}$ in equation \eqref{weakfuep} and by definition \eqref{defkesp}, we readily get
\begin{equation}
\label{stim1}
k\|\nabla u_{\eps}\|^2_{L^2(\om)}+\int_{\om\setminus\ome}u^4\le\|f\|_{H^{-1}}\|u_{\eps}\|_{ H^1(\om)}
\end{equation}
By the above inequality we first obtain
\begin{equation}
\nonumber
\|\nabla u_{\eps}\|^2_{L^2(\om)}\le \frac{\|f\|_{H^{-1}}}{k}\|u_{\eps}\|_{ H^1(\om)}
\end{equation}
Furthermore, by the inequality
\begin{equation}
\nonumber
\|u_{\eps}\|^4_{L^2(\om\setminus\ome)}\le |\om\setminus\ome|\,\int_{\om\setminus\ome}u^4 \le |\om|\,\int_{\om\setminus\ome}u^4
\end{equation}
and again by \eqref{stim1} we get
\begin{equation}
\nonumber
\|u_{\eps}\|^2_{L^2(\om\setminus\ome)}\le \big (|\om|\,\|f\|_{H^{-1}}\|u_{\eps}\|_{ H^1(\om)}\big )^{1/2}
\end{equation}
Then, by using again the  Poincar\'e inequality
\begin{equation}
\label{estint}
\|u_{\eps}\|^2_{H^1(\om)}\le \frac{1}{C}\Big (\|\nabla u_{\eps}\|^2_{L^2(\om)}+ \|u_{\eps}\|^2_{L^2(\om\setminus\ome)}\Big )
\le \frac{1}{kC}{\|f\|_{H^{-1}}}\|u_{\eps}\|_{ H^1(\om)}+\frac{1}{C}{\big (|\om|\,\|f\|_{H^{-1}}\big )^{1/2}}\|u_{\eps}\|^{1/2}_{ H^1(\om)}
\end{equation}
We can write the above estimate in the form
\begin{equation}
\nonumber
\|u_{\eps}\|^{1/2}_{H^1} \Big (\|u_{\eps}\|_{H^1}-\frac{1}{kC}{\|f\|_{H^{-1}}}  \Big )\le \frac{1}{C}|\om|^{1/2}\|f\|^{1/2}_{H^{-1}}
\end{equation}
Now, either
$$
\|u_{\eps}\|_{H^1(\om)}\le \frac{1}{kC}\,\|f\|_{H^{-1}}
$$
or
\begin{equation}
\nonumber
\Big (\|u_{\eps}\|_{H^1}-\frac{1}{kC}{\|f\|_{H^{-1}}}  \Big )^{3/2}\le \frac{1}{C}|\om|^{1/2}\|f\|^{1/2}_{H^{-1}}
\end{equation}
In both cases, we have that \eqref{apriori} holds.
\end{proof}

\begin{remark}
We stress that for $|\omega_{\eps}|\to 0$ the constant $C$ appearing in inequalities \eqref{estint} can be chosen independent of $\eps$ (see the discussion following equation \eqref{poinC} in the Appendix); hence, also the constant in the estimate \eqref{apriori} is independent of $\eps$.
\end{remark}
\begin{remark}
\label{lpbounds}
By the above estimate and by the previously mentioned Sobolev imbeddings, we obtain a priori bounds of the solutions in $L^p(\om)$, with $p\le \frac{2N}{N-2}$ if $N\ge 3$ and for every $p\ge 1$ if $N=2$.
\end{remark}

One easily verifies that the bound \eqref{apriori} holds for the potential $U$ of the unperturbed problem \eqref{probu} with $k=C=1$. We now prove additional properties
of $U$ which will be useful in the sequel.

\begin{proposition}
\label{regU}
Let $\om$ be a bounded domain in $\R^N$ with $\partial\Omega\in C^{1,1}$ and let $f\in L^p(\om)$ for any $p>2$ if $N=2$ and for $p>3$ if $N=3$; then the (unique) weak  solution $U$ of
\begin{equation}
\label{probu1}
\left\{
  \begin{array}{ll}
    -\Delta U+U^3=f & \hbox{in $\om$} \\
    \displaystyle{\frac{\partial U}{\partial\mathbf{n}}}=0, & \hbox{on $\partial\om$},
  \end{array}
\right.
\end{equation}
satisfies
\begin{equation}\label{gradU}
\| U\|_{L^{\infty}(\Omega)},\|\nabla U\|_{L^{\infty}(\Omega)}\leq C(\|f\|_{L^p(\om)}+\|f\|^3_{L^p(\om)})
\end{equation}
\end{proposition}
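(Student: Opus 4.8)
The plan is to establish the two bounds in \eqref{gradU} via elliptic regularity theory combined with the a priori $H^1$ estimate already available for $U$ (the analogue of Proposition \ref{mainest} with $k=C=1$, which gives $\|U\|_{H^1(\om)}\le C(\|f\|_{H^{-1}}+\|f\|_{H^{-1}}^3)$, and hence, by $f\in L^p\embed H^{-1}$, also control in terms of $\|f\|_{L^p}$). The key observation is a bootstrap: rewrite the PDE as the \emph{linear} Neumann problem $-\Delta U = f - U^3 =: g$ in $\om$, $\partial U/\partial\mathbf{n}=0$ on $\partial\om$, and improve the integrability of the right-hand side $g$ step by step. Since $\partial\om\in C^{1,1}$, we may apply $L^q$ elliptic regularity (Agmon--Douglis--Nirenberg / Calder\'on--Zygmund estimates up to the boundary for the Neumann problem): if $g\in L^q(\om)$ then $U\in W^{2,q}(\om)$ with $\|U\|_{W^{2,q}}\le C(\|g\|_{L^q}+\|U\|_{L^q})$.

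First I would run the bootstrap. Start from $U\in H^1(\om)\embed L^6(\om)$ (for $N=3$; for $N=2$ one has $U\in L^s$ for every $s<\infty$, which makes the argument easier). Then $U^3\in L^{2}(\om)$, so $g=f-U^3\in L^{\min(p,2)}(\om)$; Sobolev embedding of $W^{2,q}$ then gives a strictly larger exponent for $U$, hence for $U^3$, and after finitely many steps one reaches $U^3\in L^r(\om)$ for some $r>N$. For $N=3$ this requires the hypothesis $p>3$ so that the $f$-term never obstructs the iteration; for $N=2$ the hypothesis $p>2$ suffices. At that stage $g\in L^r(\om)$ with $r>N$, so $U\in W^{2,r}(\om)$, and by the Sobolev embedding $W^{2,r}(\om)\embed C^{1,\alpha}(\overline\om)$ with $\alpha=1-N/r>0$ (valid since $\partial\om\in C^{1,1}$), we obtain both $\|U\|_{L^\infty(\om)}$ and $\|\nabla U\|_{L^\infty(\om)}$ bounded by $\|U\|_{W^{2,r}(\om)}$.

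Next I would track the constants to get the precise form \eqref{gradU}. Each elliptic estimate is linear in the right-hand side: $\|U\|_{W^{2,q_{j+1}}}\le C(\|f\|_{L^p}+\|U^3\|_{L^{q_j}}+\|U\|_{L^{q_j}})$ and $\|U^3\|_{L^{q_j}}=\|U\|_{L^{3q_j}}^3$. Feeding in the previous-step bound, each stage produces a polynomial in $\|U\|_{W^{2,q_j}}$ (degree at most $3$ from the cubic term) plus a linear $\|f\|_{L^p}$ term. Since the bootstrap has a fixed finite length depending only on $N$, composing finitely many such inequalities yields a bound of the form $\|U\|_{W^{2,r}(\om)}\le C(\|f\|_{L^p(\om)}+\|f\|_{L^p(\om)}^M)$ for some exponent $M$; a small additional argument (or simply invoking that the base estimate is already of the form $\|f\|_{H^{-1}}+\|f\|_{H^{-1}}^3$ and that raising to cube at the top step is what dominates) shows the relevant power is exactly $3$, giving \eqref{gradU}. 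I would state this cleanly by first bounding $\|U\|_{L^\infty}$ (which already follows once $g\in L^q$ with $q>N/2$, a weaker requirement) and then bounding $\|\nabla U\|_{L^\infty}$ using $\|U\|_{L^\infty}$: writing $-\Delta U = f-U^3$ with $\|f-U^3\|_{L^p}\le \|f\|_{L^p}+|\om|^{1/p}\|U\|_{L^\infty}^3$, one gets $U\in W^{2,p}(\om)\embed C^{1,\alpha}(\overline\om)$ and hence $\|\nabla U\|_{L^\infty}\le C(\|f\|_{L^p}+\|U\|_{L^\infty}^3)\le C(\|f\|_{L^p}+\|f\|_{L^p}^3)$, using the $L^\infty$ bound just obtained.

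\textbf{Main obstacle.} The routine-looking but genuinely delicate point is the very first rung of the bootstrap when $N=3$: one must check that $U\in H^1\embed L^6$ already puts $U^3$ in $L^2$, and then verify that the hypothesis $p>3$ is exactly what is needed so that the $f\in L^p$ term does not force the iteration to stall before reaching an exponent $>N$; getting the $L^\infty$ bound on $\nabla U$ (as opposed to $U$) is what forces the full strength $p>N$ rather than $p>N/2$. The other point requiring care is keeping the dependence of constants polynomial in $\|f\|_{L^p}$ with top degree $3$ through the finite chain of estimates, rather than letting the cubic nonlinearity compound into a higher power at each step; this is handled by doing the $L^\infty$ estimate first and then the gradient estimate in a single final application of $W^{2,p}$ regularity, so the cube is applied only once.
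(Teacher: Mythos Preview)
Your proposal is correct and follows essentially the same route as the paper: rewrite the equation as a linear Neumann problem $-\Delta U=f-U^3$, apply $L^q$ elliptic regularity for the Neumann problem on a $C^{1,1}$ domain (the paper cites Grisvard, Theorem 2.4.2.7) together with Sobolev embeddings to bootstrap from $H^1$ up to $W^{2,p}$ with $p>N$, and then embed into $C^1(\overline\om)$. Your write-up is in fact more careful than the paper's about how the exact cubic power of $\|f\|_{L^p}$ in \eqref{gradU} survives the bootstrap---doing the $L^\infty$ bound first and then a single $W^{2,p}$ step for $\nabla U$---which the paper leaves implicit in the $N=3$ case.
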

\begin{proof}
By the previous remark, $U^3\in L^p(\Omega)$ for every $p\ge 1$ if $N=2$ and for $1\le p\le 2$ if $N=3$; by
the equation in \eqref{probu1} the same holds (in the weak sense) for $\Delta U$. Hence, we can apply known regularity results for the Neumann problem (see e.g., Theorem $2.4.2.7$ in \cite{Grisvard})
to conclude that $U$ belongs to $W^{2,\,p}(\om)$ for every $p> 1 $ if $N=2$ and for $1<p\le 2$ if $N=3$,  with
\[
\|U\|_{W^{2,\,p}(\om)}\leq C(\|f\|_{L^p(\om)}+\|f\|^3_{L^p(\om)})
\]
Now, it is known that $W^{2,\,p}(\om)\subset \mathcal{C}^k(\overline\om)$ for $k=\big [2-N/p\big]$ (see \cite{brezis}, section 9.3); hence, in the case $N=2$ it follows that $U\in \mathcal{C}^1(\overline\om)$ whenever the datum $f$ in \eqref{probu1} satisfies $f\in L^p(\om)$ with $p>2$ and
\begin{equation}
\nonumber
\| U\|_{L^{\infty}(\Omega)},\|\nabla U\|_{L^{\infty}(\Omega)}\leq C(\|f\|_{L^p(\om)}+\|f\|^3_{L^p(\om)})
\end{equation}
 In the case $N=3$, one  obtains that $U$ is H\"{o}lder continuous on $\om$; nevertheless, the same $\mathcal{C}^1$ regularity can be readily achieved by repeated application of the previous arguments  since $U^3,f\in L^p(\Omega)$ with $p>3$ and hence $U\in W^{2,p}(\Omega)$ for $p>3$.
\end{proof}
Let us now recall that for $f\ge 0$ we have $U\ge 0$ (see remark \ref{positive}); furthermore, we have a comparison principle:
\begin{proposition}
Let $f_2\ge f_1$ satisfy the assumptions of Proposition \ref{regU} and let $U_1$, $U_2$ be the  solutions to \eqref{probu1} with $f=f_1$ and $f=f_2$ respectively. Then,
$U_2\ge U_1$ in $\om$.
\end{proposition}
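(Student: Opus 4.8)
The plan is to run the standard comparison argument: test the difference of the two weak formulations against the positive part of $U_1-U_2$. I would set $w=U_1-U_2$; by Proposition \ref{regU} both $U_1$ and $U_2$ lie in $\mathcal{C}^1(\overline\om)$, so $w\in H^1(\om)$ and its positive part $w^+=\max(w,0)$ is again in $H^1(\om)$, with $\nabla w^+=\mathbf{1}_{\{w>0\}}\nabla w$. Since $U_i^3\in L^\infty(\om)$ and $f_i\in L^p(\om)$ with $p>N$, the function $w^+$ is an admissible test function in the weak formulation $\int_\om \nabla U_i\cdot\nabla\phi+\int_\om U_i^3\phi=\int_\om f_i\phi$, $\phi\in H^1(\om)$, of \eqref{probu1}.

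Subtracting the two weak formulations and choosing $\phi=w^+$, I would arrive at
\begin{equation}
\nonumber
\int_\om |\nabla w^+|^2 + \int_\om (U_1^3-U_2^3)\,w^+ = \int_\om (f_1-f_2)\,w^+ .
\end{equation}
Using $U_1^3-U_2^3=(U_1-U_2)(U_1^2+U_1U_2+U_2^2)$ and $w\,w^+=(w^+)^2$, the middle term equals $\int_\om (U_1^2+U_1U_2+U_2^2)(w^+)^2\ge 0$, since the quadratic form $a^2+ab+b^2$ is positive definite. As $f_1-f_2\le 0$ and $w^+\ge 0$, the right-hand side is $\le 0$, so both nonnegative terms on the left must vanish; in particular $\int_\om (U_1^2+U_1U_2+U_2^2)(w^+)^2=0$.

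Finally, at any point where $w^+>0$ the vanishing of this last integrand would force $U_1^2+U_1U_2+U_2^2=0$, i.e. $U_1=U_2=0$, hence $w^+=0$ — a contradiction; so $w^+=0$ a.e. in $\om$, and by continuity $U_1\le U_2$ throughout $\om$. (One could equally exploit $\int_\om|\nabla w^+|^2=0$ to conclude that $w^+$ is constant on the connected domain $\om$ and then exclude a positive constant by the same algebraic remark.) I do not foresee a real obstacle here: the only points deserving a line of care are the admissibility of $w^+$ as a test function, guaranteed by the $\mathcal{C}^1$ (indeed $H^1$) regularity of $U_1,U_2$ from Proposition \ref{regU}, and the sign of the nonlinear contribution, which is settled once and for all by the positive definiteness of $a^2+ab+b^2$.
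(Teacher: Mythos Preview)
Your argument is correct. It is, however, a different proof from the one in the paper. The paper argues pointwise: it writes the linear equation satisfied by $W=U_2-U_1$, observes that on the open set $\{W<0\}$ the function $W$ is superharmonic, and then combines the strong maximum principle with the Hopf boundary lemma (to rule out a boundary minimum under the homogeneous Neumann condition) to conclude that $\{W<0\}=\emptyset$. Your route is the variational/Stampacchia one: subtract the weak formulations and test against $(U_1-U_2)^+$, so that the monotonicity of the cubic together with $f_1\le f_2$ forces both nonnegative pieces on the left to vanish. The truncation method has the advantage of being entirely at the $H^1$ level and of bypassing the Hopf lemma (and hence any delicate boundary argument); the paper's classical approach, on the other hand, makes visible the role of superharmonicity and connects directly to the maximum principle. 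Both rely on the same algebraic fact that $U_1^2+U_1U_2+U_2^2\ge 0$ with equality only at $U_1=U_2=0$.
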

\begin{proof}
The function $W=U_2-U_1$ solves the problem
\begin{equation}
\label{compar}
\left\{
  \begin{array}{ll}
    -\Delta W=-QW+f_2-f_1 & \hbox{in $\om$} \\
    \displaystyle{\frac{\partial W}{\partial\mathbf{n}}}=0, & \hbox{on $\partial\om$},
  \end{array}
\right.
\end{equation}
where $Q=U_1^2+U_1U_2+U_2^2\ge 0$. Let $\om^-=\{x\in\overline\om\,|\, W(x) <0$\}; since $W$ is continuous in $\overline\om$, the set $\om^-$ is open. Moreover, by the above equation, $W$ is \emph{superharmonic} in $\om^-$ and therefore it assumes the minimum value at some point on the boundary $\partial\om^-$. On the other hand, such point must belong to
$\overline{\partial\om^-\backslash\partial\om}$ due to the homogeneus Neumann condition and to the Hopf principle. But $W=0$ on this set, so that $\om^-=\emptyset$ and $W\ge 0$ in $\om$.
\end{proof}

\begin{corollary}
Assume that $\mathrm{essinf }_{x\in\om}f(x)=m$. Then, the solution $U$ to problem \eqref{probu1} satisfies
\begin{equation}
\label{lowbdU}
U(x)\ge m^{1/3}, \quad\quad x\in \om
\end{equation}
\end{corollary}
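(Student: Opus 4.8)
The plan is to read off \eqref{lowbdU} directly from the comparison principle just established, by using a constant function as the comparison solution. The first step is to observe that the constant $U_1\equiv m^{1/3}$ is the unique weak solution of \eqref{probu1} with right-hand side $f_1\equiv m$. Indeed, since $\om$ is bounded we have $m\in L^\infty(\om)\subset L^p(\om)$ for every $p$, so $f_1$ meets the hypotheses of Proposition \ref{regU}; moreover $-\Delta(m^{1/3})+(m^{1/3})^3=m$ holds pointwise and the homogeneous Neumann condition is trivially satisfied, so $m^{1/3}$ is a solution, and it is \emph{the} solution by the uniqueness part of Theorem \ref{exist}. (No sign restriction on $m$ is needed here: the real cube root makes sense for all $m$, and the bound is simply vacuous if $m\le 0$.)

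The second step is to apply the comparison principle proved above with $f_2=f$ and $f_1\equiv m$. Since $f(x)\ge \mathrm{essinf}_{\om}f=m=f_1(x)$ for a.e.\ $x\in\om$, that proposition gives $U=U_2\ge U_1\equiv m^{1/3}$ in $\om$, which is exactly \eqref{lowbdU}.

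I do not expect a real obstacle here; the only points deserving a word of care are the admissibility of the constant datum $m$ (handled by boundedness of $\om$ as noted) and the fact that the difference $U-m^{1/3}$ is continuous up to $\overline\om$, so that the argument behind the comparison principle — superharmonicity of $W$ on its negativity set combined with the Hopf lemma — is applicable; this continuity is precisely the $\mathcal C^1(\overline\om)$ regularity supplied by Proposition \ref{regU}. If one preferred to avoid quoting the comparison principle, the same conclusion follows variationally: replacing $U$ by $v=\max(U,m^{1/3})$ does not increase $\int_\om|\nabla\cdot|^2$, and on the set $\{U<m^{1/3}\}$ one has $\tfrac14 (m^{1/3})^4-f\,m^{1/3}\le \tfrac14 U^4-fU$ because $t\mapsto \tfrac14 t^4-f(x)t$ is decreasing on $(-\infty,m^{1/3})$ (its derivative $t^3-f(x)\le t^3-m<0$ there); hence $E(v)\le E(U)$, and uniqueness of the minimizer forces $v=U$, i.e.\ $U\ge m^{1/3}$.
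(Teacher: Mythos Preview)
Your proof is correct and follows essentially the same approach as the paper: apply the comparison principle with $f_1\equiv m$, $f_2=f$, noting that the constant $m^{1/3}$ solves \eqref{probu1} for the constant datum $m$. The variational alternative you sketch at the end is a valid extra argument (the energy is strictly convex for the unperturbed problem, so the truncation $v=\max(U,m^{1/3})$ forces equality), though the paper does not invoke it.
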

\begin{proof}
Apply Proposition \ref{compar} by choosing $f_1=m$ and $f_2=f$. Since $U_1=m^{1/3}$, the above bound follows.
\end{proof}

\smallskip
Let us now discuss the regularity of the solution $u_{\eps}$
 we first note that, by remark \ref{lpbounds}, the term
$$
\chi_{\om\setminus\ome}u_\eps^3
$$
is bounded in $L^p(\om)$  for $1<p\leq 2$ if $N=3$ and in $L^p(\om)$ for any $p\ge 1$ if $N=2$. On the other hand, $u_{\eps}$ satisfies
\begin{equation}
\nonumber
 -\div(k_\eps(x)\nabla u_\eps)=f-\chi_{\om\setminus\ome}u_\eps^3,\quad \quad x\in\om
 \end{equation}
with $k_{\eps}$ defined by \eqref{defkesp}. Since $f\in L^p(\Omega)$ with $p>1$ if $N=2$ and $p>3$ if $N=3$   we can apply the interior estimate in \cite{GT} (Theorem $8.24$) which yields, for
any $\om'\subset\subset\om$
\begin{equation}
\label{regueps}
\|u_{\eps}\|_{\mathcal{C}^{0,\alpha}(\overline\om')}\le C\big (\|u_{\eps}\|_{L^2(\om)}+\|u_{\eps}\|^3_{L^6(\om)}+ \|f\|_{L^{p}(\Omega)}\big)\leq C\big (\|u_{\eps}\|^3_{H^1(\om)}+ \|f\|_{L^{p}(\Omega)}\big)
\end{equation}
where $0<\alpha<1$, $C>0$ only depend on $N$, $k$, $p$ and $\om'$.

Finally, using \eqref{apriori}, we obtain
\begin{equation}
\label{regueps2}
\|u_{\eps}\|_{\mathcal{C}^{0,\alpha}(\overline\om')}\le C
\end{equation}
where $C$ depends only on $\Omega',k,N$ and on $ \|f\|_{L^{p}(\Omega)}$.
Now, by taking $\om'\supset\ome$ and by observing that $k_{\eps}=1$ in $\om\backslash\om'$, it is not difficult to show that $u_{\eps}$ is \emph{uniformly H\"{o}lder continuous in} $\overline\om$ and that
 \begin{equation}
\label{regueps3}
\|u_{\eps}\|_{\mathcal{C}^{0,\alpha}(\overline\Omega)}\le C
\end{equation}
where $C$ depends only on $\Omega,k,N$ and on $ \|f\|_{L^{p}(\Omega)}$.

\subsection{Estimate on the $H^1$ norm of $u_\eps-U$}

\begin{theorem}
\label{h1estimweps}
\label{ext} Let $f\in L^p(\Omega)$ for some $p>N$ ($N=2,\, 3)$; assume further that $f\ge m>0$ a.e. in $\om$. Let $U$ be the solution to problem \eqref{probu} and $u_\eps$ the solution to problem \eqref{probeps}. Then

\begin{equation}\label{ineqh1estimweps}
\| u_\eps-U\|_{H^1(\Omega)}\leq C |\ome|^{\frac{1}{2}}
\end{equation}

where $C$ is a positive constant that depends on $k,\,\om,\,m$ and on $ \|f\|_{L^p(\om)}$.
\end{theorem}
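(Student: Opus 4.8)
The plan is to derive an equation for $w_\eps = u_\eps - U$ by subtracting the weak formulations of \eqref{probeps} and \eqref{probu}, then test against $w_\eps$ itself and control the right-hand side in terms of $|\ome|$. First I would write, for every $\phi\in H^1(\om)$,
\begin{equation}
\nonumber
\int_\om k_\eps\nabla w_\eps\nabla\phi + \int_{\om\setminus\ome}(u_\eps^3-U^3)\phi = \int_\om(1-k_\eps)\nabla U\nabla\phi + \int_\ome U^3\phi,
\end{equation}
where the right-hand side collects the terms where the two problems differ: the conductivity jump is supported on $\ome$ (since $1-k_\eps = (1-k)\mathbf{1}_\ome$) and the reaction term $\chi_{\om\setminus\ome}u_\eps^3$ differs from $U^3$ both through the cubic increment and through the missing piece $\int_\ome U^3\phi$. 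The key algebraic fact is the factorization $u_\eps^3-U^3 = w_\eps(u_\eps^2+u_\eps U+U^2)$, and on $\om\setminus\ome$ the coefficient $Q_\eps := u_\eps^2+u_\eps U+U^2$ is nonnegative (indeed $u_\eps\ge 0$, $U\ge 0$ by Remark \ref{positive}), so this term has a good sign when $\phi = w_\eps$.

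Next I would set $\phi = w_\eps$. The left-hand side is bounded below by $k\|\nabla w_\eps\|_{L^2(\om)}^2 + \int_{\om\setminus\ome}Q_\eps w_\eps^2 \ge k\|\nabla w_\eps\|_{L^2(\om)}^2$. For the right-hand side I use that $\nabla U\in L^\infty(\om)$ and $U\in L^\infty(\om)$ by Proposition \ref{regU} (applicable since $p>N$), so
\begin{equation}
\nonumber
\Big|\int_\ome(1-k)\nabla U\nabla w_\eps\Big| + \Big|\int_\ome U^3 w_\eps\Big| \le C\|\nabla U\|_{L^\infty}|\ome|^{1/2}\|\nabla w_\eps\|_{L^2(\om)} + C\|U\|_{L^\infty}^3|\ome|^{1/2}\|w_\eps\|_{L^2(\ome)},
\end{equation}
using Cauchy--Schwarz on $\ome$ to extract the factor $|\ome|^{1/2}$. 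This gives $k\|\nabla w_\eps\|_{L^2}^2 \le C|\ome|^{1/2}(\|\nabla w_\eps\|_{L^2}+\|w_\eps\|_{L^2})$ with $C$ depending on $k,\om,m,\|f\|_{L^p}$ through the bounds on $U$. To close the estimate I still need to control the full $H^1$ norm, i.e.\ the $L^2$ norm of $w_\eps$, not just the gradient; this is the step I expect to require the most care, because the reaction term only gives control of $w_\eps$ weighted by $Q_\eps$ on $\om\setminus\ome$, and $Q_\eps$ could in principle degenerate. Here the lower bound $U\ge m^{1/3}>0$ from Corollary (applied with essinf $f = m$) saves the day: $Q_\eps \ge U^2 \ge m^{2/3}$ pointwise, so the reaction term actually dominates $m^{2/3}\|w_\eps\|_{L^2(\om\setminus\ome)}^2$, and combined with $\|w_\eps\|_{L^2(\ome)}^2 \le$ (by the uniform $L^\infty$ bound \eqref{regueps3} on $u_\eps$ and the $L^\infty$ bound on $U$) $\;C|\ome|$, one recovers $\|w_\eps\|_{L^2(\om)}^2 \le C(\|\nabla w_\eps\|_{L^2}^2/m^{2/3}\cdot\text{something}) + C|\ome|$.

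More cleanly: keeping the reaction term on the left, $\phi=w_\eps$ yields
\begin{equation}
\nonumber
k\|\nabla w_\eps\|_{L^2(\om)}^2 + m^{2/3}\|w_\eps\|_{L^2(\om\setminus\ome)}^2 \le C|\ome|^{1/2}\big(\|\nabla w_\eps\|_{L^2(\om)} + \|w_\eps\|_{L^2(\ome)}\big),
\end{equation}
and since $\|w_\eps\|_{L^2(\ome)}\le C|\ome|^{1/2}$ by \eqref{regueps3} and the $L^\infty$ bound on $U$, the right-hand side is $\le C|\ome|^{1/2}\|\nabla w_\eps\|_{L^2(\om)} + C|\ome|$. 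Young's inequality absorbs $C|\ome|^{1/2}\|\nabla w_\eps\|_{L^2}$ into $\tfrac{k}{2}\|\nabla w_\eps\|_{L^2}^2 + C|\ome|$, leaving $\tfrac{k}{2}\|\nabla w_\eps\|_{L^2}^2 + m^{2/3}\|w_\eps\|_{L^2(\om\setminus\ome)}^2 \le C|\ome|$. Adding $\|w_\eps\|_{L^2(\ome)}^2\le C|\ome|$ back in gives $\|\nabla w_\eps\|_{L^2(\om)}^2 + \|w_\eps\|_{L^2(\om)}^2 \le C|\ome|$, which is exactly \eqref{ineqh1estimweps} after taking square roots, with $C=C(k,\om,m,\|f\|_{L^p})$. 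The only subtlety worth flagging in the write-up is the justification that $w_\eps\in H^1(\om)$ is an admissible test function (immediate, both $u_\eps,U\in H^1$) and that all the $L^\infty$ bounds invoked are genuinely $\eps$-independent, which follows from the remarks after Proposition \ref{mainest} and from \eqref{regueps3}.
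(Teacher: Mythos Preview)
Your proof is correct and takes a genuinely different route from the paper. Both arguments start by deriving the same weak equation for $w_\eps$ and testing with $\phi=w_\eps$, but the paper does \emph{not} keep the reaction term $\int_{\om\setminus\ome}Q_\eps w_\eps^2$ on the left to extract $L^2$ control of $w_\eps$. Instead, it decomposes $w_\eps=\tilde w_\eps+a_\eps$ with $a_\eps$ the constant determined by $\int_{\om\setminus\ome}\tilde w_\eps q_\eps=0$, applies a weighted Poincar\'e inequality to $\tilde w_\eps$, and bounds $|a_\eps|$ by integrating the equation over $\om$ (this is where the lower bound $q_\eps\ge\tfrac{3}{4}U^2\ge\tfrac{3}{4}m^{2/3}$ enters); it then closes a quadratic inequality in $\|\nabla w_\eps\|_{L^2}$.

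Your argument is shorter and more transparent: the pointwise bound $Q_\eps\ge U^2\ge m^{2/3}$ (valid on $\om\setminus\ome$ since $u_\eps,U\ge 0$) gives coercivity in $L^2(\om\setminus\ome)$ directly from the reaction term, and the missing piece $\|w_\eps\|_{L^2(\ome)}$ is handled by the $\eps$-uniform $L^\infty$ bound \eqref{regueps3} on $u_\eps$ together with the $L^\infty$ bound on $U$. The trade-off is that the paper's route uses only the $H^1$ energy estimate and a Poincar\'e-type inequality, whereas yours invokes the De~Giorgi--Nash--Moser input already recorded in \eqref{regueps3}. Since \eqref{regueps3} is proved earlier in Section~4.1 (independently of Theorem~\ref{h1estimweps}) and is needed later in the paper anyway, this is not an additional cost, and there is no circularity.
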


\begin{proof}
Using \eqref{probu}, we obtain

\begin{equation}\nonumber
-\Delta U=-\div\left(k_\eps \nabla U\right)-\div\left((1-k_\eps) \nabla U\right)=-\div\left(k_\eps \nabla U\right)-(1-k)\div\left(\chi_{\ome} \nabla U\right)=-U^3+f
\end{equation}

and therefore

\begin{equation}\label{scompu}
-\div\left(k_\eps \nabla U\right)+\chi_{\om\setminus\ome}U^3= f+(1-k)\div\left(\chi_{\ome} \nabla U\right)-\chi_{\ome}U^3
\end{equation}

Now subtracting the above \eqref{scompu} from the equation for $u_\eps$ in \eqref{probeps} we get

\begin{equation}\nonumber
-\div\left(k_\eps \nabla (u_\eps-U)\right)+\chi_{\om\setminus\ome}(u_\eps^3-U^3)= -(1-k)\div\left(\chi_{\ome} \nabla U\right)+\chi_{\ome}U^3
\end{equation}

that, letting $w_\eps=u_\eps-U$ and $q_\eps=U^2+Uu_\eps+u_\eps^2$, we can rewrite as

\begin{equation}\label{equaw}
-\div\left(k_\eps \nabla w_\eps\right)+\chi_{\om\setminus\ome}w_\eps q_\eps= (k-1)\div\left(\chi_{\ome} \nabla U\right)+\chi_{\ome}U^3
\end{equation}

Let's now observe that in order to prove the theorem it is enough to show that

\begin{equation}\label{ineqgrad}
\|\nabla w_\eps\|_{L^2}\leq C |\ome|^{\frac{1}{2}}
\end{equation}

This follows from the fact that we can write $w_\eps=\tilde{w}_\eps +a_\eps$, where

\begin{equation}\label{defwtildea}
\int_{\om\setminus\ome}\tilde{w}_\eps q_\eps =0 \ \ \ \ \text{and} \ \ \ \ a_\eps=
\frac{1}{\int_{\om\setminus\ome}q_\eps}\,\int_{\om\setminus\ome}{w}_\eps q_\eps
\end{equation}

For the function $\tilde{w}_\eps$ we have by Poincar\'e inequality (see Appendix)

\begin{equation}\label{poincarwtilde}
\left\|\tilde{w}_\eps\right\|_{L^2} \leq C \left\|\nabla \tilde{w}_\eps\right\|_{L^2}\,\,\big (=C \left\|\nabla {w}_\eps\right\|_{L^2}\big )
\end{equation}

Moreover, being $$\displaystyle{\int_\om \div\left(k_\eps \nabla w_\eps\right)=\int_{\partial\om}\frac{\partial w_\eps}{\partial{\bf n}}=0}$$ from \eqref{probu} and $$\displaystyle{\int_{\om}\div\left(\chi_{\ome} \nabla U\right)=0}$$ from divergence theorem, using \eqref{equaw} and integrating over $\om$  we get

\begin{equation}
|a_\eps|=\frac{1}{\int_{\om\setminus\ome}q_\eps}\left|\int_{\om\setminus\ome}{w}_\eps q_\eps\right|=
\frac{1}{\int_{\om\setminus\ome}q_\eps}\left|\int_{\ome}U^3\right|
\end{equation}
Now, by our assumptions on $f$, by the elementary estimate $q_\eps\ge \frac{3}{4}U^2$ and
by \eqref{lowbdU}, we readily obtain
\begin{equation}
\label{estaeps}
|a_\eps|\leq \frac{4}{3 m^{2/3}|\om\setminus\ome|}\|U\|^3_{L^6(\om)}\,|\ome|^{\frac{1}{2}}
\end{equation}

Then, using \eqref{gradU}, \eqref{poincarwtilde} and \eqref{estaeps},
\begin{equation}\label{ineqgrad1}
\|w_\eps\|_{H^1}=\|\tilde{w}_\eps +a_\eps\|_{H^1}\leq\|\tilde{w}_\eps\|_{H^1}+\left| a_\eps\right| |\om|^{\frac{1}{2}}\leq
 C |\ome|^{\frac{1}{2}}
\end{equation}

In order to prove \eqref{ineqgrad},
multiplying \eqref{equaw} times $w_\eps$ and integrating over $\om$ by parts, we get

\begin{equation}\nonumber
\int_\om k_\eps \left|\nabla w_\eps\right|^2+\int_{\om\setminus\ome}w_\eps^2 q_\eps=-(k-1)\int_{\ome} \nabla U\nabla w_\eps+\int_{\ome}U^3 w_\eps
\end{equation}

which leads to

\begin{equation}\nonumber
k\|\nabla w_\eps\|_{L^2}^2\leq\left|(1-k)\int_{\ome} \nabla U\nabla w_\eps\right|+\left|\int_{\ome}U^3 w_\eps\right|\leq
\end{equation}

\begin{equation}\label{estimw1}
\left\{(1-k)\|\nabla U\|_{L^\infty(\ome)}\|\nabla w_\eps\|_{L^2}+\|U\|_{L^\infty(\ome)}^3 \|w_\eps\|_{L^2}\right\}\left|\ome\right|^{\frac{1}{2}}.
\end{equation}

using again the decomposition \eqref{defwtildea}, Poincar\'e inequality \eqref{poincarwtilde} for $\tilde{w}_\eps$, estimate \eqref{estaeps} for $a_\eps$ and \eqref{gradU} we obtain

\begin{equation}\nonumber
k\|\nabla w_\eps\|_{L^2}^2\leq C\left(\|f\|_{L^p(\om)}+\|f\|^3_{L^p(\om)}\right) \left\{\|\nabla w_\eps\|_{L^2}+\left|\ome\right|^{\frac{1}{2}}\right\}\left|\ome\right|^{\frac{1}{2}}.
\end{equation}

where $C=C(k,\Omega)$.

Finally, solving second order inequality, we get

\begin{equation}\nonumber
\|\nabla w_\eps\|_{L^2}\leq C \left|\ome\right|^{\frac{1}{2}}
\end{equation}
where $C$ is a positive constant depending on $\Omega,k$ and on $\|f\|_{L^p(\om)}$.

\end{proof}

We now derive energy estimates for $u_{\epsilon}-U$.

\begin{theorem}
\label{L2estimweps}

Let $f$ satisfy the same assumptions as in theorem \ref{h1estimweps}. Then
\begin{equation}\label{energy2}
\|u_{\eps}-u\|_{L^2(\om)}\leq C|\omega_{\eps}|^{\frac{1}{2}+\eta}
\end{equation}
 for some $\eta>0$ and  where $C$ is a positive constant depending on $k,\Omega, m$ and on $\|f\|_{L^{p}(\om)}$.

\end{theorem}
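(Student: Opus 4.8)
The plan is to run an Aubin--Nitsche duality argument in the spirit of Capdeboscq--Vogelius, the extra decay $\eta>0$ over the $H^1$-rate of Theorem~\ref{h1estimweps} being produced by a Meyers-type higher integrability estimate for the gradient of the adjoint solution.

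First I would fix $g\in L^2(\om)$ and let $z=z_g\in H^1(\om)$ be the weak solution of the self-adjoint, $\eps$-dependent Neumann problem
\[
-\div(k_\eps\nabla z)+\chi_{\om\setminus\ome}\,q_\eps z=g\ \ \text{in}\ \om,\qquad \frac{\partial z}{\partial\mathbf n}=0\ \ \text{on}\ \partial\om,
\]
with $q_\eps=U^2+Uu_\eps+u_\eps^2$ as in \eqref{equaw}. Since $q_\eps\ge\tfrac34U^2\ge\tfrac34 m^{2/3}>0$ on $\om\setminus\ome$, the associated bilinear form is coercive on $H^1(\om)$ with a constant independent of $\eps$ (by the Poincar\'e inequality of the Appendix, whose constant is $\eps$-uniform since $|\ome|\to0$); hence $z$ exists, is unique, and $\|z\|_{H^1(\om)}\le C\|g\|_{L^2(\om)}$ with $C$ uniform in $\eps$. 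Testing \eqref{equaw} against $z$ and the equation for $z$ against $w_\eps$ — the boundary integrals vanishing because $\chi_\ome\equiv0$ near $\partial\om$ and $\partial w_\eps/\partial\mathbf n=0$ — yields
\[
\int_\om g\,w_\eps=(1-k)\int_{\ome}\nabla U\cdot\nabla z+\int_{\ome}U^3\,z .
\]
Using $\|U\|_{L^\infty(\om)},\|\nabla U\|_{L^\infty(\om)}\le C$ from Proposition~\ref{regU}, the right-hand side is bounded by $C\big(\|\nabla z\|_{L^1(\ome)}+\|z\|_{L^1(\ome)}\big)$.

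The zeroth-order term is handled by H\"older and the Sobolev embedding $H^1(\om)\hookrightarrow L^s(\om)$ (with $s=6$ if $N=3$, any finite $s$ if $N=2$): $\|z\|_{L^1(\ome)}\le|\ome|^{1-1/s}\|z\|_{L^s(\om)}\le C|\ome|^{1-1/s}\|g\|_{L^2(\om)}$, and $1-1/s>\tfrac12$. For the gradient term a plain $L^2$ bound on $\nabla z$ would only give the exponent $\tfrac12$; here I would invoke a Meyers-type interior higher integrability estimate for the divergence-form equation $-\div(k_\eps\nabla z)=g-\chi_{\om\setminus\ome}q_\eps z$ with bounded measurable (merely elliptic, ratio $1/k$) coefficients. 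Since $\ome\subset K_0\subset\subset\om$ by \eqref{sepbound} and the right-hand side is bounded in $L^2(\om)$ uniformly in $\eps$ — because $q_\eps$ is uniformly bounded thanks to \eqref{regueps3} and Proposition~\ref{regU} — there exist $p_0>2$ and $C>0$ depending only on $k$, $N$ and the fixed geometry (not on $\eps$) such that $\|\nabla z\|_{L^{p_0}(K_1)}\le C\|g\|_{L^2(\om)}$ on a fixed neighborhood $K_1$ of $K_0$; hence $\|\nabla z\|_{L^1(\ome)}\le|\ome|^{1-1/p_0}\|\nabla z\|_{L^{p_0}(\ome)}\le C|\ome|^{1-1/p_0}\|g\|_{L^2(\om)}$ with $1-1/p_0>\tfrac12$. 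Combining, $\big|\int_\om g\,w_\eps\big|\le C|\ome|^{1/2+\eta}\|g\|_{L^2(\om)}$ for $|\ome|\le1$, with $\eta=\min\{\tfrac12-\tfrac1{p_0},\tfrac12-\tfrac1s\}>0$; taking the supremum over $\|g\|_{L^2(\om)}=1$ gives \eqref{energy2}. The main obstacle is precisely this gradient term: one must ensure that the Meyers exponent $p_0$ and the constant can be chosen uniformly in the inclusion, which is exactly where the separation condition \eqref{sepbound} and the uniform bounds \eqref{regueps3}, \eqref{gradU} enter.
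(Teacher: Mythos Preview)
Your argument is correct but follows a route genuinely different from the paper's. Both are Aubin--Nitsche duality arguments; the distinction lies in the choice of dual operator and the source of the extra $\eta$.

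The paper takes as dual problem $-\Delta\bar w_\eps+\chi_{\om\setminus\ome}q_\eps\bar w_\eps=w_\eps$ with \emph{constant} principal coefficient, so that standard Neumann $H^2$-regularity (Grisvard) applies and the Sobolev embedding $H^2(\om)\hookrightarrow W^{1,p'}(\om)$, $p'>2$, yields the higher integrability of $\nabla\bar w_\eps$. The price is that the resulting identity carries $\nabla u_\eps$ (not $\nabla U$) on the right, which is only controlled in $L^2$; the paper then uses H\"older with exponent $p<2$ together with the $H^1$-estimate of Theorem~\ref{h1estimweps} to extract the gain $|\ome|^{1/p}$.

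You instead keep $-\div(k_\eps\nabla\,\cdot\,)$ in the dual, which makes the bilinear form perfectly symmetric and produces the cleaner identity with $\nabla U\in L^\infty$ on the right; but since $k_\eps$ is discontinuous, $H^2$-regularity is unavailable and you must invoke Meyers' interior higher-integrability estimate to get $\nabla z\in L^{p_0}_{\mathrm{loc}}$ for some $p_0>2$ uniform in $\eps$. This is exactly the mechanism used in \cite{capvoge}. Your treatment of the uniformity (ellipticity ratio $1/k$ fixed, right-hand side bounded in $L^2$ via \eqref{regueps3} and Proposition~\ref{regU}, and the separation \eqref{sepbound} allowing a fixed interior domain $K_1$) is the right one. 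Either route gives \eqref{energy2}; the paper's avoids the appeal to Meyers at the cost of a slightly less transparent right-hand side, while yours stays closer to \cite{capvoge} and makes the estimate on $\ome$ more direct.
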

\begin{proof}
Set $w_{\eps}=u_{\eps}-U$. Then, $w_{\eps}\in H^1(\om)$ satisfies
\begin{equation}\label{en11}
\int_{\om}
\nabla w_{\eps}\cdot\nabla \phi\, dx+\int_{\om\backslash\omega_{\eps}}
q_{\eps}w_{\eps} \phi\, dx=(k-1)\int_{\omega_{\eps}}\nabla u_{\eps}\cdot\nabla \phi\, dx+\int_{\omega_{\eps}}U^3\phi\, dx\quad\forall \phi\in H^1(\om).
\end{equation}
where $q_{\eps}=u_{\eps}^2+u_{\eps}^2U^2+U^2$ and, by the estimate \eqref{ineqh1estimweps},
\[
\|w_{\eps}\|_{H^1(\om)}\leq C|\omega_{\eps}|^{1/2}
\]
where $C=C(k,\Omega,\|f\|_{L^p(\om)})$.
Consider now $\bar w_{\eps}\in H^1(\om)$ weak solution to
\begin{equation}\label{en2}
\int_{\om}
\nabla \bar w_{\eps}\cdot\nabla \phi\, dx+\int_{\om\backslash\omega_{\eps}}
q_{\eps}\bar w_{\eps} \phi\, dx=\int_{\om} w_{\eps}\phi\, dx\quad\forall \phi\in H^1(\om).
\end{equation}
Then, choosing $\phi=\bar w_{\eps}$,  one  has
\[
\|\bar w_{\eps}\|_{H^1(\om)}\leq C\|w_{\eps}\|_{H^1(\om)}\leq C|\omega_{\eps}|^{1/2} .
\]
Furthermore  by  Theorem $2.4.2.7$ in \cite{Grisvard} we have that $\bar w_{\eps}$ belongs to $H^{2}$
\begin{equation}\label{reg}
\|\bar w_{\eps}\|_{H^2(\om)}\leq C\|w_{\eps}\|_{L^2(\om)}
\end{equation}
 Choosing $\phi=w_{\eps}$ into $(\ref{en2})$ we get
\begin{equation}\label{en3}
\int_{\om}
\nabla \bar w_{\eps}\cdot\nabla w_{\eps}, dx+\int_{\om\backslash\omega_{\eps}}
q_{\eps}\bar w_{\eps} w_{\eps}\, dx=\int_{\om}w^2_{\eps}\, dx.
\end{equation}
On the other hand, choosing $\phi=\bar w_{\eps}$ into (\ref{en11}) we derive
\begin{equation}\label{en4}
\int_{\om}
\nabla \bar w_{\eps}\cdot\nabla w_{\eps}, dx+\int_{\om\backslash\omega_{\eps}}
q_{\eps}\bar w_{\eps} w_{\eps}\, dx=(k-1)\int_{\omega_{\eps}}\nabla u_{\eps}\cdot\nabla \bar w_{\eps}\, dx+\int_{\omega_{\eps}}U^3\bar w_{\eps}\, dx.
\end{equation}
Hence, by (\ref{en3}) and (\ref{en4}), we have,
\begin{equation}\label{en5}
\int_{\om}w^2_{\eps}\, dx=(k-1)\int_{\omega_{\eps}}\nabla u_{\eps}\cdot\nabla \bar w_{\eps}\, dx+\int_{\omega_{\eps}}U^3\bar w_{\eps}\, dx.
\end{equation}
From (\ref{reg}) and Sobolev Imbedding Theorem we have that $\bar w_{\eps}\in W^{1,p'}(\omega_{\eps})$ for any $p'>1$ if $N=2$ and for $1<p'\leq 6$ if $N=3$.
\begin{equation}\label{regrad}
\|\bar w_{\eps}\|_{W^{1,p'}(\Omega)}\leq C \|w_{\eps}\|_{L^2(\om)}.
\end{equation}
Since $U\in H^1(\om)$, again from Sobolev Imbedding Theorem, $U^3\in L^p$, for all  $p> 1 $ if $N=2$ and for $1<p\leq 2$ if $N=3$ . Hence, applying Holder inequality and choosing $p'$ so that $1<p<2$, we get
\[
\int_{\om}w^2_{\eps}\, dx\leq |k-1|\|\nabla \bar w_{\eps}\|_{L^{p'}(\omega_{\eps})}\|\nabla u_{\eps}\|_{L^p(\omega_{\eps})}+\| \bar w_{\eps}\|_{L^{p'}(\omega_{\eps})}\|U^3\|_{L^p(\omega_{\eps})}
\]
Observe now that
\[
\|\nabla u_{\eps}\|_{L^p(\omega_{\eps})}\leq \|\nabla w_{\eps}\|_{L^p(\omega_{\eps})}+\|\nabla U\|_{L^p(\omega_{\eps})}
\]
By \eqref{gradU} the second term can be bounded as follows
\[
\|\nabla U\|_{L^p(\omega_{\eps})}\leq C(\|f\|_{L^p(\Omega)}+\|f\|^3_{L^p(\Omega)})|\omega_{\eps}|^{1/p}
\]
where $C=C(\Omega)$.
Moreover, by H\"older inequality and by the energy estimates  (\ref{ineqh1estimweps}) we have
\[
\|\nabla w_{\eps}\|_{L^p(\omega_{\eps})}\leq |\ome|^{\frac{1}{p}-\frac{1}{2}}\|w_{\eps}\|_{H^1}\le C |\omega_{\eps}|^{1/p}
\]
Hence, we get the bound
\[
\|\nabla u_{\eps}\|_{L^p(\omega_{\eps})}\leq C |\omega_{\eps}|^{1/p}
\]
where $C=C(\|f\|_{L^p(\Omega)},k,m)$. Analogously
\[
\|U^3\|_{L^p(\omega_{\eps})}\leq C|\omega_{\eps}|^{1/p}
\]
where $C=C(\Omega, \|f\|_{L^p(\Omega)})$.  Recalling (\ref{regrad}), we  get
\[
\int_{\om}w_{\eps}^2\leq C\|w_{\eps}\|_{L^2(\om)}|\omega_{\eps}|^{1/p}
\]
which finally gives
\[
\|w_{\eps}\|_{L^2(\om)}\leq C|\omega_{\eps}|^{1/p}
\]
with $\frac{1}{p}>\frac{1}{2}$ and $C=C(k,(\|f\|_{L^p(\Omega)})$.
\end{proof}

\section{Proof of main result: the asymptotic formula}

In this section we deduce an asymptotic representation formula for the perturbed potential
$$w_{\eps}=u_{\eps}-U$$
analogous to the one obtained in theorem $1$ of \cite{capvoge} for a voltage perturbation in the presence of inhomogeneities.

Let $N_U(x,y)$ be the Green function of the operator $-\Delta+3U^2$ with \emph{homogeneous} Neumann condition defined in \eqref{greenU}.

Note that we can write
\begin{equation}
\nonumber
N_U(x,y)=N(x,y)+z(x,y)
\end{equation}
where $N$ is the \emph{Neumann function} for the Laplacian, satisfying
\begin{equation}\label{greenN}
-\Delta_x N(x,y)=\delta(x-y)\quad\mathrm{for}\,\, x\in\om,\quad\quad\quad \frac{\partial N}{\partial n_x}\Big |_{\partial\om}=\frac{1}{|\partial\om|}
\end{equation}
and, for every $y\in\om$, the function $x\mapsto z(x,y)$ solves the problem
\begin{equation}
\label{probz}
\left\{
  \begin{array}{ll}
    -\Delta_x z(x,y)+3U^2(x) z(x,y)=-3U^2(x)N(x,y), & \hbox{in $\om$} \\
    \displaystyle{\frac{\partial z}{\partial {n_x}}}=-\frac{1}{|\partial\om|}, & \hbox{on $\partial\om$},
  \end{array}
\right.
\end{equation}
We recall that $N(\cdot,y)\in W^{1,1}(\om)$ and therefore it belongs to $L^p(\om)$ for $p$ in some interval depending on the dimension  (for every $p>1$ in dimension two). Then, by the smoothness of $U$ and by the same regularity arguments as in the previous section, we may take $z$ and $\nabla z$ continuous and bounded; it follows in particular that
\begin{equation}
\nonumber
N_U(\cdot,y)\in L^p(\om)
\end{equation}

Let us now multiply both the equations \eqref{probeps} and \eqref{probu} by a test function $\phi$, integrate by parts and use the boundary condition; we get the identity
\begin{equation}
\label{prelid}
\int_\om k_\eps\nabla u_\eps\nabla\phi + \int_{\om\setminus\ome}u_\eps^3\phi=\int_\om \nabla U\nabla\phi + \int_{\om}U^3\phi
\end{equation}
By subtracting to both sides of \eqref{prelid} the quantity
\begin{equation}
\nonumber
\int_{\om}\nabla u_\eps\nabla\phi+\int_{\om}u_\eps^3\phi
\end{equation}
we obtain
\begin{equation}
\nonumber
\int_{\ome} (k-1)\nabla u_\eps\nabla\phi - \int_{\ome}u_\eps^3\phi=\int_\om \nabla (U-u_\eps)\nabla\phi + \int_{\om}(U^3-u_\eps^3)\phi
\end{equation}
By introducing the perturbed potential $w_\eps=u_\eps-U$, we can write the above equation in the form
\begin{equation}
\nonumber
\int_\om \nabla w_\eps\nabla\phi + \int_{\om}w_\eps\big (U^2+U\,u_\eps+u_\eps^2\big )\phi=\int_{\ome} (1-k)\nabla u_\eps\nabla\phi + \int_{\ome}u_\eps^3\phi
\end{equation}
Finally, by using the identity
$$U^2+U\,u_\eps+u_\eps^2=3U^2+3U\,w_\eps+w_\eps^2$$
we get
\begin{equation}
\label{asyform1}
\int_\om \nabla w_\eps\nabla\phi + \int_{\om}3U^2\,w_\eps\phi=\int_{\ome} (1-k)\nabla u_\eps\nabla\phi + \int_{\ome}u_\eps^3\phi-\int_{\om}3U\,w_\eps^2\phi -\int_{\om}\,w_\eps^3\phi
\end{equation}
Let us fix $y\in\partial\om$ (or even $y\in \om\backslash \overline\ome$) and let $\phi_m\in \mathcal{C}^1(\om)$ be a sequence converging to $N_U(\cdot,y)$
in $W^{1,1}(\om)$ and in $\mathcal{C}^1(\overline D)$, where $\ome\subset D\subset\subset\om$ . Now, the regularity of $U$ provided by \eqref{gradU} and the discussion following \eqref{regueps} allow us to insert $\phi_m$ into \eqref{asyform1} and to pass to the limit, so that
\begin{eqnarray}
\nonumber
  \int_\om \nabla w_\eps\nabla_x N_U\,dx + \int_{\om}3U^2\,w_\eps N_U\,dx &= &\int_{\ome} (1-k)\nabla u_\eps\nabla_x N_U\,dx + \int_{\ome}u_\eps^3N_U\,dx  \\
\nonumber   &-& \int_{\om}3U\,w_\eps^2N_U\,dx -\int_{\om}\,w_\eps^3N_U\,dx
\end{eqnarray}
After integration by parts in the first term by using \eqref{greenU} (here we exploit the \emph{homogeneous Neumann condition} satisfied by $N_U$) we obtain
\begin{equation}
\label{asyform3}
w_\eps(y)=\int_{\ome} (1-k)\nabla u_\eps\nabla_x N_U\,dx + \int_{\ome}u_\eps^3N_U\,dx
-\int_{\om}3U\,w_\eps^2N_U\,dx -\int_{\om}\,w_\eps^3N_U\,dx
\end{equation}
The following result is a first step towards an asymptotic representation formula in our non linear setting:
\begin{proposition}
\label{firststep}
Let $\mathbf{1}_{\ome}$ denotes the indicator function of the set $\ome$. Then the following relation holds
\begin{equation}
\label{asyform4}
w_\eps(y)=|\ome|\Bigg ((1-k)\int_{\om} |\ome|^{-1}\mathbf{1}_{\ome}\nabla u_\eps\nabla_x N_U\,dx +
\int_{\om}|\ome|^{-1}\mathbf{1}_{\ome}u_\eps^3 N_U\,dx\Bigg ) +o(|\ome|)
\end{equation}
\end{proposition}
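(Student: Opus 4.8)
\textbf{Proof plan for Proposition \ref{firststep}.}

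The plan is to start from the exact identity \eqref{asyform3} and show that the last two terms, $\int_\om 3U\,w_\eps^2 N_U\,dx$ and $\int_\om w_\eps^3 N_U\,dx$, together with a piece of the first two, are all $o(|\ome|)$, so that only the leading volume-order contribution survives. The two ``bulk'' error terms are controlled purely by the energy estimates already established: since $N_U(\cdot,y)\in L^p(\om)$ for a suitable $p$ (as recalled after \eqref{probz}), H\"older's inequality gives
\begin{equation}
\nonumber
\Big|\int_\om 3U\,w_\eps^2 N_U\,dx\Big|\le 3\|U\|_{L^\infty(\om)}\|N_U(\cdot,y)\|_{L^p(\om)}\|w_\eps\|^2_{L^{2p'}(\om)},
\end{equation}
and for $N\le 3$ one may choose $p$ close to $1$ so that $2p'\le 6$ and the Sobolev embedding $H^1(\om)\hookrightarrow L^{2p'}(\om)$ applies; then Theorem \ref{h1estimweps} yields $\|w_\eps\|^2_{L^{2p'}(\om)}\le C\|w_\eps\|^2_{H^1(\om)}\le C|\ome|$, which is $O(|\ome|)$ — not yet $o(|\ome|)$. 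To gain the extra decay I would instead use the sharper $L^2$ estimate of Theorem \ref{L2estimweps}, $\|w_\eps\|_{L^2(\om)}\le C|\ome|^{\frac12+\eta}$: interpolating between $L^2$ and $L^6$ (i.e.\ $\|w_\eps\|_{L^{2p'}}\le \|w_\eps\|_{L^2}^{\theta}\|w_\eps\|_{L^6}^{1-\theta}$ with $\theta>0$ when $2p'<6$, which is arranged by taking $p>1$ close enough to $1$), one gets $\|w_\eps\|_{L^{2p'}(\om)}\le C|\ome|^{\frac12+\theta\eta}$, hence $\|w_\eps\|^2_{L^{2p'}}\le C|\ome|^{1+2\theta\eta}=o(|\ome|)$. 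The cubic term $\int_\om w_\eps^3 N_U\,dx$ is bounded analogously by $\|N_U\|_{L^p}\|w_\eps\|^3_{L^{3p'}}$, and since $3p'\le 6$ for $p$ near $1$ (actually $3p'\to 3$), the same interpolation gives a power of $|\ome|$ strictly larger than $1$, so this term is $o(|\ome|)$ as well.

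It remains to rewrite the first two terms of \eqref{asyform3} in the stated form. The term $\int_\ome u_\eps^3 N_U\,dx$ is already exactly $|\ome|\int_\om |\ome|^{-1}\mathbf 1_\ome u_\eps^3 N_U\,dx$, so nothing is needed there. For the gradient term $\int_\ome(1-k)\nabla u_\eps\cdot\nabla_x N_U\,dx$, I would likewise write it as $|\ome|\,(1-k)\int_\om|\ome|^{-1}\mathbf 1_\ome\nabla u_\eps\cdot\nabla_x N_U\,dx$ — this is again an identity, not an approximation — so that \eqref{asyform4} follows immediately from \eqref{asyform3} once the three remaining integrals are shown to be $o(|\ome|)$. (The point of Proposition \ref{firststep} is precisely to isolate the two genuinely volume-order terms, which still contain the unknown field $u_\eps$ on $\ome$; the subsequent homogenization via the measure $\mu$ and the polarization tensor $\mathcal M_{ij}$ in Theorem \ref{asyrepr} is a separate step.)

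The main obstacle is the decay of the bulk remainder terms: a naive bound using only the $H^1$ estimate $\|w_\eps\|_{H^1}\le C|\ome|^{1/2}$ gives exactly $O(|\ome|)$ and is therefore not good enough to conclude $o(|\ome|)$. The essential input is the improved rate of Theorem \ref{L2estimweps}, together with an interpolation inequality that trades part of the $L^6$ control (which only costs $|\ome|^{1/2}$) for the faster $L^2$ control (which costs $|\ome|^{1/2+\eta}$); one must check that the integrability index $p>N$ of $f$ indeed allows choosing $p'$ in the range where $N_U(\cdot,y)\in L^p(\om)$ while simultaneously keeping $2p'$ and $3p'$ below the Sobolev exponent $6$, which for $N=2,3$ is comfortably the case since $p$ may be taken arbitrarily close to $1$. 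Everything else is a routine application of H\"older's inequality and the estimates proved in Section 4.
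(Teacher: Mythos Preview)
Your proposal is correct and follows essentially the same route as the paper: start from the exact identity \eqref{asyform3}, observe that the first two terms are literally the leading expression in \eqref{asyform4}, and show that the two bulk remainders are $o(|\ome|)$ by combining H\"older (using $N_U(\cdot,y)\in L^p$) with the $H^1$ estimate \eqref{ineqh1estimweps} and the improved $L^2$ estimate \eqref{energy2} via interpolation. The paper phrases the interpolation as Gagliardo--Nirenberg rather than $L^2$--$L^6$ interpolation, but the content is the same. One small simplification you overlooked: for the cubic term the paper does not interpolate at all, since $\|w_\eps\|_{L^{3q}}^3\le C\|w_\eps\|_{H^1}^3\le C|\ome|^{3/2}$ directly from Sobolev and \eqref{ineqh1estimweps}; the sharper $L^2$ bound is only needed for the quadratic term $\int_\om 3U\,w_\eps^2 N_U$.
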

\begin{proof} We need to prove suitable bounds of the two last terms in \eqref{asyform3}. By H\"{o}lder inequality, the last term is bounded by
$\|w_\eps\|^3_{L^{3q}(\om)}\|N_U\|_{L^{p}(\om)}$,
where $q=p/(p-1)$. Hence, by Sobolev embedding and by
\eqref{ineqh1estimweps} we get
\begin{equation}
\nonumber
\Big |\int_{\om}\,w_\eps^3N_U\,dx\Big |\le C |\ome|^{3/2}
\end{equation}
for some constant $C$ depending on $k$, $\om$ and $U$. Let us now consider the remaining term; by the boundedness of $U$ and again by H\"{o}lder inequality we have
\begin{equation}
\label{stimweps2}
\Big |\int_{\om}3U\,w_\eps^2N_U\,dx\Big |\le 3\|U\|_{L^{\infty}(\om)}\Big |\int_{\om}\,w_\eps^2N_U\,dx\Big |\le
 3\|U\|_{L^{\infty}(\om)}\|N_U\|_{L^{p}(\om)}\|w_\eps\|^2_{L^{2q}(\om)}
\end{equation}
By a version of the \emph{Gagliardo-Nirenberg inequality} for bounded domains (the constants depending only on $q$ and on the domain, see \cite{nire}) we now get
\begin{equation}
\nonumber
\|w_\eps\|_{L^{2q}(\om)}\le C_1\|\nabla w_\eps\|^{1-\frac{1}{q}}_{L^{2}(\om)}\,\|w_\eps\|^{\frac{1}{q}}_{L^{2}(\om)} +C_2\|w_\eps\|_{L^{2}(\om)}\le
\end{equation}
\centerline{(by \eqref{ineqh1estimweps} and
\eqref{energy2})}
\begin{equation}
\nonumber
\le \tilde C_1 \,|\ome|^{\frac{1}{2}+\frac{\eta}{q}}+\tilde C_2 \,|\ome|^{\frac{1}{2}+\eta}
\end{equation}
Then, the proposition follows by inserting these estimates into \eqref{asyform3}.
\end{proof}

\begin{remark}
Equation \eqref{asyform4} should be compared with the analogous formula $(8)$ given in \cite{capvoge} for the steady state voltage perturbation caused by internal conductivity inhomogeneities. The different sign of the term containing the gradients is due to the definition \eqref{greenU} of the Green function $N_U$.
\end{remark}
Following \cite{capvoge} we now introduce the variational solutions $V^{(j)}$, $v_{\eps}^{(j)}$ to the problems

\begin{equation}
\label{probVj}
\left\{
  \begin{array}{ll}
    \Delta V^{(j)}=0, & \hbox{in $\om$} \\
    \displaystyle{\frac{\partial V^{(j)}}{\partial\mathbf{n}}}=n_j, & \hbox{on $\partial\om$},
  \end{array}
\right.
\end{equation}

\begin{equation}
\label{probvjeps}
\left\{
  \begin{array}{ll}
    \div(k_\eps(x)\nabla v_{\eps}^{(j)})=0, & \hbox{in $\om$} \\
    \displaystyle{\frac{\partial v_{\eps}^{(j)}}{\partial\mathbf{n}}}=n_j, & \hbox{on $\partial\om$},
  \end{array}
\right.
\end{equation}
$n_j$ being the $j-$th  coordinate of the outward normal to $\partial\om$ and where the functions $V^{(j)}$, $v_{\eps}^{(j)}$ are normalized by $\int_{\partial\om}V^{(j)}=\int_{\partial\om}v_{\eps}^{(j)}=0$. We observe that
\begin{equation}
\label{esplVj}
V^{(j)}=x_j-\frac{1}{|\partial\om|}\int_{\partial\om}x_j
\end{equation}
and that the difference $v_{\eps}^{(j)}-V^{(j)}$ satisfies estimates analogous to \eqref{ineqh1estimweps} and to \eqref{energy2} (see \cite{capvoge} sect.2). Hence, by integration by parts and by exploiting such estimates, we get (see \cite{capvoge} sect.3, eqs. (20)-(21))
\begin{equation}
\label{capvogeform}
\int_\om k_\eps\nabla (u_\eps-U)\nabla(v_{\eps}^{(j)}\phi)\,dx =\int_\om \nabla (u_\eps-U)\nabla(V^{(j)}\phi)\,dx
+\int_{\ome} (k-1)\nabla (u_\eps-U)\nabla\phi\,V^{(j)}\,dx + o(|\ome|)
\end{equation}
for every $\phi$ smooth enough. Now, again using the weak form of the equations \eqref{probeps} and \eqref{probu}, we easily get the identities
\begin{equation}
\nonumber
\int_\om k_\eps\nabla (u_\eps-U)\nabla(v_{\eps}^{(j)}\phi)\,dx =\int_{\ome}(1-k) \nabla U \nabla(v_{\eps}^{(j)}\phi)\,dx
+\int_{\ome} U^3 \,v_{\eps}^{(j)}\phi\,dx+\int_{\om\backslash\ome}(U^3-u^3_{\eps})\,v_{\eps}^{(j)}\phi\,dx
\end{equation}
\begin{equation}
\nonumber
\int_\om \nabla (u_\eps-U)\nabla(V^{(j)}\phi)\,dx=
\int_{\ome} (1-k)\nabla u_\eps\nabla(V^{(j)}\phi)\,dx +\int_{\ome} U^3\, V^{(j)}\phi\,dx+\int_{\om\backslash\ome}(U^3-u^3_{\eps})\,V^{(j)}\phi\,dx
\end{equation}
By inserting these into \eqref{capvogeform} we obtain
\begin{equation}
\nonumber
\int_{\ome}(1-k) \nabla U \nabla(v_{\eps}^{(j)}\phi)\,dx
+\int_{\ome} U^3 \,v_{\eps}^{(j)}\phi\,dx+\int_{\om\backslash\ome}(U^3-u^3_{\eps})\,v_{\eps}^{(j)}\phi\,dx=
\end{equation}
\begin{equation}
\nonumber
\int_{\ome} (1-k)\nabla u_\eps\nabla(V^{(j)}\phi)\,dx +\int_{\ome} U^3\, V^{(j)}\phi\,dx+\int_{\om\backslash\ome}(U^3-u^3_{\eps})\,V^{(j)}\phi\,dx
+\int_{\ome} (k-1)\nabla (u_\eps-U)\nabla\phi\,V^{(j)}\,dx + o(|\ome|)
\end{equation}
That is, by straightforward rearrangements,
\begin{equation}
\nonumber
(1-k)\int_{\ome} \nabla U \nabla(v_{\eps}^{(j)}\phi)\,dx=(1-k)\Bigg [\int_{\ome} \nabla u_\eps\nabla(V^{(j)}\phi)\,dx
-\int_{\ome} \nabla u_\eps\nabla\phi\,V^{(j)}\,dx + \int_{\ome} \nabla U\nabla\phi\,V^{(j)}\,dx\Bigg ]
\end{equation}
\begin{equation}
\nonumber
-\int_{\ome} U^3\, (v_{\eps}^{(j)}-V^{(j)})\phi\,dx+\int_{\om\backslash\ome}(u^3_{\eps}-U^3)\,(v_{\eps}^{(j)}-V^{(j)})\phi\,dx
 + o(|\ome|)
\end{equation}
By the boundedness of $U$, $u_{\eps}$, by H\"{o}lder inequality and by the previous $L^2$ estimates of the perturbations $u_{\eps}-U$ and $v_{\eps}^{(j)}-V^{(j)}$, we conclude that \emph{the whole last term of the above equation is} $o(|\ome|)$.
Hence we can write
\begin{equation}
\nonumber
\int_{\ome} \nabla U \nabla(v_{\eps}^{(j)}\phi)\,dx=\int_{\ome} \nabla u_\eps\nabla(V^{(j)}\phi)\,dx
-\int_{\ome} \nabla u_\eps\nabla\phi\,V^{(j)}\,dx + \int_{\ome} \nabla U\nabla\phi\,V^{(j)}\,dx+ o(|\ome|)
\end{equation}
\begin{equation}
\nonumber
=\int_{\ome} \nabla u_\eps\nabla V^{(j)}\,\phi\,dx
 + \int_{\ome} \nabla U\nabla\phi\,V^{(j)}\,dx+ o(|\ome|)
\end{equation}
\begin{equation}
\nonumber
=\int_{\ome} \nabla u_\eps\nabla V^{(j)}\,\phi\,dx
 + \int_{\ome} \nabla U\nabla\phi\,v_{\eps}^{(j)}\,dx+O\big (\|v_{\eps}^{(j)}- V^{(j)}\|_{L^2(\om)}|\ome|^{1/2}\|\nabla U \|_{L^{\infty}(\ome)} \big ) +o(|\ome|)
\end{equation}
\begin{equation}
\nonumber
=\int_{\ome} \nabla u_\eps\nabla V^{(j)}\,\phi\,dx
 + \int_{\ome} \nabla U\nabla\phi\,v_{\eps}^{(j)}\,dx+o(|\ome|)
\end{equation}
After a further rearrangement, we get
\begin{equation}
\nonumber
\int_{\ome} \nabla U \nabla v_{\eps}^{(j)}\,\phi\,dx=\int_{\ome} \nabla u_\eps\nabla V^{(j)}\,\phi\,dx
 +o(|\ome|)
\end{equation}
A final rescaling yields
\begin{equation}
\label{capvoge22}
\int_{\om} \nabla U |\ome|^{-1}\mathbf{1}_{\ome}\nabla v_{\eps}^{(j)}\,\phi\,dx=\int_{\om} |\ome|^{-1}\mathbf{1}_{\ome}\nabla u_\eps\,\nabla V^{(j)}\,\phi\,dx+o(1)
\end{equation}
By the results of \cite{capvoge} there exist a regular Borel measure $\mu$, functions $\mathcal{M}_{i\,j}\in L^2(\om,d\mu)$
and a sequence ${\omega}_{\eps_n}$, with $|{\omega}_{\eps_n}|\to 0$, such that
\begin{equation}
\label{misure}
|{\omega}_{\eps_n}|^{-1}\mathbf{1}_{{\omega}_{\eps_n}}\,dx\rightarrow d\mu,\quad\quad\quad |{\omega}_{\eps_n}|^{-1}\mathbf{1}_{{\omega}_{\eps_n}}\frac{\partial}{\partial x_i} v_{\eps_n}^{(j)}\,dx\rightarrow
\mathcal{M}_{i\,j}\,d\mu
\end{equation}
in the weak* topology of the dual of $\mathcal{C}^0(\overline\om)$. Then, passing to the limit in \eqref{capvoge22} and by recalling \eqref{esplVj} we can state
\begin{proposition}
\label{secondstep}
Let $u_{\eps}$, $U$ denote the solutions to \eqref{probeps} and \eqref{probu} and let $\omega_{\eps_n}$, with $|{\omega}_{\eps_n}|\to 0$, be a sequence satisfying \eqref{sepbound} and \eqref{misure}. Then
\begin{equation}
\label{limisure}
\lim_{n\to\infty} |{\omega}_{\eps_n}|^{-1}\mathbf{1}_{{\omega}_{\eps_n}}\frac{\partial u_{\eps_n}}{\partial x_j} \,dx=
\mathcal{M}_{i\,j}\frac{\partial U}{\partial x_i}\,d\mu
\end{equation}
in the weak* topology of the dual of $\mathcal{C}^0(\overline\om)$.
\end{proposition}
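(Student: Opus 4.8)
The plan is to pass to the limit along the extracted sequence $\omega_{\eps_n}$ in the rescaled identity \eqref{capvoge22}, exploiting the $\mathcal{C}^1$ regularity of $U$ in order to treat $\nabla U$ as a legitimate factor in a test function, and then invoking the definition \eqref{misure} of the limiting matrix measure $\mathcal{M}_{ij}\,d\mu$.

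First I would record two preliminary facts. By Proposition \ref{regU} the unperturbed potential $U$ belongs to $\mathcal{C}^1(\overline\om)$, so each component $\partial U/\partial x_i$ lies in $\mathcal{C}^0(\overline\om)$; consequently, for every $\phi\in\mathcal{C}^0(\overline\om)$ the product $\phi\,\partial U/\partial x_i$ is again an element of $\mathcal{C}^0(\overline\om)$, hence an admissible test function for the weak$^*$ convergences in \eqref{misure}. Secondly, the signed measures $|\omega_{\eps_n}|^{-1}\mathbf{1}_{\omega_{\eps_n}}(\partial u_{\eps_n}/\partial x_j)\,dx$ have uniformly bounded total variation: writing $u_{\eps_n}=U+w_{\eps_n}$ and using $\|\nabla U\|_{L^\infty(\om)}\le C$ from \eqref{gradU} together with $\|\nabla w_{\eps_n}\|_{L^2(\om)}\le C|\omega_{\eps_n}|^{1/2}$ from \eqref{ineqh1estimweps}, Hölder's inequality gives
\[
|\omega_{\eps_n}|^{-1}\int_{\omega_{\eps_n}}\Big|\frac{\partial u_{\eps_n}}{\partial x_j}\Big|\,dx\le |\omega_{\eps_n}|^{-1/2}\,\|\nabla u_{\eps_n}\|_{L^2(\omega_{\eps_n})}\le C .
\]

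Next I would fix a smooth $\phi$ and let $n\to\infty$ in \eqref{capvoge22}. Since $V^{(j)}=x_j-\frac{1}{|\partial\om|}\int_{\partial\om}x_j$ by \eqref{esplVj}, one has $\nabla V^{(j)}=e_j$, so the right-hand side of \eqref{capvoge22} is exactly $\int_\om |\omega_{\eps_n}|^{-1}\mathbf{1}_{\omega_{\eps_n}}(\partial u_{\eps_n}/\partial x_j)\,\phi\,dx$ up to an $o(1)$ term that vanishes as $n\to\infty$ for this fixed $\phi$. On the left-hand side, applying the weak$^*$ convergence $|\omega_{\eps_n}|^{-1}\mathbf{1}_{\omega_{\eps_n}}(\partial v_{\eps_n}^{(j)}/\partial x_i)\,dx\rightharpoonup\mathcal{M}_{ij}\,d\mu$ from \eqref{misure}, tested against $\phi\,\partial U/\partial x_i\in\mathcal{C}^0(\overline\om)$ and summing over $i$, yields the limit $\int_\om \mathcal{M}_{ij}(\partial U/\partial x_i)\,\phi\,d\mu$. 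Hence
\[
\lim_{n\to\infty}\int_\om |\omega_{\eps_n}|^{-1}\mathbf{1}_{\omega_{\eps_n}}\frac{\partial u_{\eps_n}}{\partial x_j}\,\phi\,dx=\int_\om \mathcal{M}_{ij}\frac{\partial U}{\partial x_i}\,\phi\,d\mu
\]
for every smooth $\phi$. Finally, since smooth functions are dense in $\mathcal{C}^0(\overline\om)$ in the uniform norm, the uniform total-variation bound above --- together with the finiteness of the limit measure, which follows from $\mathcal{M}_{ij}\in L^2(\om,d\mu)\subset L^1(\om,d\mu)$ and $\nabla U\in L^\infty(\om)$ --- permits a routine three-$\varepsilon$ argument extending the identity to all $\phi\in\mathcal{C}^0(\overline\om)$; this is precisely the claimed weak$^*$ convergence \eqref{limisure}.

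I do not anticipate a genuine obstacle: the proof is essentially an exercise in combining \eqref{capvoge22}, \eqref{misure}, and the regularity $U\in\mathcal{C}^1(\overline\om)$. The only points demanding a little care are the legitimacy of $\phi\,\partial U/\partial x_i$ as a test function for \eqref{misure} (secured by Proposition \ref{regU}) and the passage from the smooth test functions for which \eqref{capvoge22} was established to arbitrary continuous ones (secured by the uniform total-variation estimate).
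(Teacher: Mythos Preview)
Your proposal is correct and follows essentially the same route as the paper: pass to the limit in \eqref{capvoge22}, using $\nabla V^{(j)}=e_j$ from \eqref{esplVj} on the right-hand side and the weak$^*$ convergence \eqref{misure} tested against $\phi\,\partial U/\partial x_i\in\mathcal{C}^0(\overline\om)$ on the left. The paper's own proof is the single clause ``passing to the limit in \eqref{capvoge22} and by recalling \eqref{esplVj}''; you have simply spelled out the two technical points it leaves implicit (the uniform total-variation bound and the density extension from smooth to continuous test functions).
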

We are now in position to prove our asymptotic representation formula. We'll state it here in a more precise way:
\begin{theorem}
\label{asyreprr}
Let $u_{\eps}$, $U$ denote the solutions to \eqref{probeps} and \eqref{probu} and let $\omega_{\eps_n}$, with $|{\omega}_{\eps_n}|\to 0$, be a sequence satisfying \eqref{sepbound} and \eqref{misure}. Then, if $w_{\eps_n}=u_{\eps_n}-U$, we have
\begin{equation}
\label{asyformfinall}
w_{\eps_n}(y)=|\omen|\int_{\om} \Big ((1-k)\mathcal{M}_{i\,j}\frac{\partial U}{\partial x_i}\frac{\partial N_U}{\partial x_j}+U^3 N_U\Big )\,d\mu(x) +o(|\omen|)\quad\quad y\in\partial \om
\end{equation}
where $N_U(x,y)$ is the solution of \eqref{greenU}.
\end{theorem}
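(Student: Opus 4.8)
The plan is to specialize the first-order identity of Proposition \ref{firststep} to the sequence $\omega_{\eps_n}$ furnished by \eqref{misure}, divide by $|\omen|$, and pass to the limit $n\to\infty$ in the two integrals on the right-hand side: the gradient term will be handled by Proposition \ref{secondstep}, the cubic term by a uniform-convergence argument.

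First I would fix $y\in\partial\om$ and check that $N_U(\cdot,y)$ is an admissible test function near $K_0$. Since $y$ is separated from $K_0$ (which contains $\mathrm{supp}\,\mu$ and every $\overline{\omega_{\eps_n}}$) by the distance $d_0$ of \eqref{sepbound}, the Neumann function $N(\cdot,y)$ of \eqref{greenN} is harmonic, hence $\mathcal{C}^{\infty}$, in a neighborhood of $K_0$; elliptic regularity applied to \eqref{probz} together with the smoothness of $U$ (Proposition \ref{regU}) then makes the corrector $z(\cdot,y)$ — and therefore $N_U(\cdot,y)=N(\cdot,y)+z(\cdot,y)$ and $\nabla_x N_U(\cdot,y)$ — continuous and bounded in a neighborhood of $K_0$. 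I would then fix a cutoff $\eta\in\mathcal{C}^{\infty}_c(\om)$ with $\eta\equiv1$ on a neighborhood of $K_0$, so that $\eta\,N_U(\cdot,y)$ and $\eta\,\partial_{x_j}N_U(\cdot,y)$ belong to $\mathcal{C}^0(\overline\om)$, while multiplying the integrands in Proposition \ref{firststep} by $\eta$ changes nothing, since $\mathbf{1}_{\omen}$ and $\mu$ are supported in $K_0$.

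For the gradient term I would write $\nabla u_{\eps_n}\cdot\nabla_x N_U=\sum_j \partial_{x_j}u_{\eps_n}\,\partial_{x_j}N_U$ and test the weak$^*$ convergence \eqref{limisure} of Proposition \ref{secondstep} against $\eta\,\partial_{x_j}N_U(\cdot,y)\in\mathcal{C}^0(\overline\om)$, obtaining
\[
(1-k)\int_\om|\omen|^{-1}\mathbf{1}_{\omen}\,\nabla u_{\eps_n}\cdot\nabla_x N_U\,dx\ \longrightarrow\ (1-k)\int_\om\mathcal{M}_{i\,j}\,\frac{\partial U}{\partial x_i}\,\frac{\partial N_U}{\partial x_j}\,d\mu(x).
\]
For the cubic term I would first upgrade $u_{\eps_n}\to U$: by \eqref{regueps3} the family $\{u_{\eps_n}\}$ is bounded in $\mathcal{C}^{0,\alpha}(\overline\om)$, hence precompact in $\mathcal{C}^0(\overline\om)$ by Arzel\`{a}--Ascoli, while $\|u_{\eps_n}-U\|_{L^2(\om)}\to0$ by Theorem \ref{h1estimweps}, so every uniformly convergent subsequence has limit $U$ and thus $u_{\eps_n}\to U$ uniformly on $\overline\om$. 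Then $\eta\,u_{\eps_n}^3\,N_U\to\eta\,U^3\,N_U$ uniformly on $\overline\om$, and since the probability measures $|\omen|^{-1}\mathbf{1}_{\omen}\,dx$ converge weak$^*$ to $\mu$,
\[
\int_\om|\omen|^{-1}\mathbf{1}_{\omen}\,u_{\eps_n}^3\,N_U\,dx=\int_\om|\omen|^{-1}\mathbf{1}_{\omen}\,\eta\,u_{\eps_n}^3\,N_U\,dx\ \longrightarrow\ \int_\om U^3\,N_U\,d\mu(x),
\]
using the elementary fact that $\int f_n\,d\nu_n\to\int f\,d\nu$ whenever $f_n\to f$ uniformly and $\nu_n\to\nu$ weak$^*$ with uniformly bounded total masses. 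Dividing the identity of Proposition \ref{firststep} (written along $\omega_{\eps_n}$) by $|\omen|$, using that its error is $o(|\omen|)$, and letting $n\to\infty$ with these two limits then produces exactly \eqref{asyformfinall}.

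The main obstacle is not any single estimate but the two structural points above: ensuring that $N_U(\cdot,y)$ (and its gradient) is a legitimate test function for the weak$^*$ limits — which is precisely where the separation \eqref{sepbound} between the inclusions and the observation point $y\in\partial\om$ is used, so that the singularity of the Neumann function plays no role — and upgrading the $L^2$-convergence $u_{\eps_n}\to U$ to uniform convergence on $\overline\om$, needed so that the cubic term can be passed to the limit against the limiting measure $\mu$. Everything else is a direct application of Propositions \ref{firststep} and \ref{secondstep}.
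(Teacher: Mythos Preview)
Your argument is correct and follows the paper's route exactly: start from Proposition~\ref{firststep}, extend $\nabla_x N_U(\cdot,y)$ to a continuous function on $\overline\om$ (the paper writes this as a vector field $\Phi_y\in\mathcal{C}^0(\overline\om)$ agreeing with $\nabla_x N_U$ on $K_0$, you use a cutoff $\eta$---same idea), and invoke Proposition~\ref{secondstep} and \eqref{defmu} for the two limits. Your treatment of the cubic term via Arzel\`a--Ascoli and \eqref{regueps3} is a fully fleshed-out version of what the paper dismisses as ``by now standard estimates''; an equally valid (and slightly quicker) alternative is to split $u_{\eps_n}^3=U^3+(u_{\eps_n}^3-U^3)$, pass the first part to the limit directly via \eqref{defmu}, and bound the second using the uniform $L^\infty$ bounds and $\|w_{\eps_n}\|_{L^2}\le C|\omen|^{1/2+\eta}$ from Theorem~\ref{L2estimweps}.
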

\begin{proof}
By proposition \ref{firststep} we have
\begin{equation}
\nonumber
w_{\eps_n}(y)=|\omen|\Big (\int_{\om}(1-k) |\omen|^{-1}\mathbf{1}_{\omen}\nabla u_{\eps_n}\nabla_x N_U\,dx +
\int_{\om}|\omen|^{-1}\mathbf{1}_{\omen}u_{\eps_n}^3 N_U\,dx\Big ) +o(|\omen|)
\end{equation}
Let $K_0$ is a compact set such that $\omega_{\epsilon}\subset K_0\subset\Omega$. By the properties of $N_U$  we can find a vector valued test function $\Phi_y\in \mathcal{C}^0(\overline\om)$ such that
\[
\Phi_y(x)=\nabla_x N_U(x,y),\,\ \text{ for }x\in K_0,\,\, y\in\partial\Omega.
\]
Then, by proposition \ref{secondstep},
\begin{equation}
\nonumber
\int_{\om}(1-k) |\omen|^{-1}\mathbf{1}_{\omen}\frac{\partial u_{\eps_n}}{\partial x_j} \frac{\partial N_U}{\partial {x_j}}\,dx =\int_{\om}(1-k)\mathcal{M}_{i\,j}\frac{\partial U}{\partial x_i}\frac{\partial N_U}{\partial x_j}\,d\mu(x)+o(1)
\end{equation}
Moreover, by now standard estimates one can readily prove
\begin{equation}
\nonumber
\int_{\om}|\omen|^{-1}\mathbf{1}_{\omen}u_{\eps_n}^3 N_U\,dx=\int_{\om} U^3N_U\,d\mu(x) + o(1)
\end{equation}
By inserting the above relations in the previous identity, the theorem follows.
\end{proof}
We are now ready for:

\smallskip\noindent
\emph{Proof of theorem \ref{asyrepr}.} The asymptotic formula \eqref{asyformfinal} is the same as equation \eqref{asyformfinall} proved in the previous theorem. In order to prove the last statement of the theorem, we remark that the polarization tensor $\mathcal{M}_{i\,j}$ is defined exactly as in \cite{capvoge}; hence, the stated properties follow by the same arguments
as in section $4$ of \cite{capvoge} with trivial modifications.

\section{Localization of small inhomogeneities}

Let us consider the case of a finite number of well separated homogeneities of small diameter $\eps$ centered at points $z_1,...,z_m\in\om$. In the limit $\eps\to 0$, one obtains from the asymptotic formula \eqref{asyformfinal} (see also \cite{capvoge})
\begin{equation}
\label{asyform5}
w_\eps(y)=\eps^N\sum_{l=1}^m \Big ((1-k) \mathcal{M}_{i\,j}(z_l)\frac{\partial U}{\partial x_i}(z_l)\frac{\partial N_U}{\partial x_j}(z_l,y)+U^3 (z_l)N_U(z_l,y)\Big ) +o(\eps^N)\quad\quad y\in\partial \om
\end{equation}
We now show that the above formula, together with a suitable integration of (measured) boundary data, allows one to obtain useful identities for localizing the inhomogeneities and reconstructing the polarization tensor.

We first prove an auxiliary identity; let $g$ be a given function on $\partial\om$ and consider the (unique) solution $W$ of the boundary value problem
\begin{equation}
\label{probw}
\left\{
  \begin{array}{ll}
    -\Delta W+3U^2 W=0, & \hbox{in $\om$} \\
    \displaystyle{\frac{\partial W}{\partial\mathbf{n}}}=g, & \hbox{on $\partial\om$},
  \end{array}
\right.
\end{equation}
where $U$ is the background potential which solves \eqref{probu}. Then we have
\begin{equation}
\label{reprW}
W(z)=\int_{\partial\om}N_U(z,y)\,g(y)\,dS_y\quad\quad\quad z\in\om
\end{equation}
where $N_U$ is the Neumann function defined by \eqref{greenU}. The proof follows readily by observing that, due to the homogeneous Neumann condition satisfied by $N_U$, we can write
\begin{equation}
\nonumber
\int_{\partial\om}N_U(z,y)\,g(y)\,dS_y=
\int_{\partial\om}N_U(z,y)\,\displaystyle{\frac{\partial W}{\partial\mathbf{n}}}\,dS_y - \displaystyle{\frac{\partial N_U}{\partial\mathbf{n}}}(z,y)\,W(y)\,dS_y
\end{equation}
We now consider the \emph{average measurement}
\begin{equation}
\label{avemeas}
\Gamma\equiv \int_{\partial\om}w_\eps(y)\,g(y)\,dS_y
\end{equation}
By inserting \eqref{asyform5} in this expression and taking account of \eqref{reprW} we get
\begin{equation}
\label{useform}
\Gamma=\eps^N\sum_{l=1}^m\Big [(1-k)\mathcal{M}_{i\,j}(z_l)\frac{\partial U}{\partial x_i}(z_l)\frac{\partial W}{\partial x_j}(z_l)+U^3 (z_l)W(z_l)\Big ]+ o(\eps^N)
\end{equation}
We first apply the previous formula to the simple case of approximating the location and the polari\-zation tensor of a \emph{single} small inhomogeneity (in two dimension) centered at the point $(\bar x,\bar y)$.

We observe that by choosing a constant datum $f$ in problem \eqref{probu}, the (unique) solution is a constant background potential  $U=\lambda\equiv f^{1/3}$. In that case, the equation for the auxiliary function $W$ becomes
\begin{equation}
\nonumber
-\Delta W(x,y)+3\lambda^2 W(x,y)=0
\end{equation}
The above equation has a family of solutions of the form
\begin{equation}
\nonumber
W(x,y)= e^{ax+by},\quad\quad\quad a,\,b\,\in \R
\end{equation}
provided that $a^2+b^2=3\lambda^2$. In particular, we have the two solutions
$$W_1(x,y)=e^{\lambda\sqrt 3 \,x},\quad\quad W_2(x,y)=e^{\lambda\sqrt 3 \,y}$$
respectively with Neumann data
$$g_1(x,y)=\lambda\sqrt 3 \,n_1(x,y)\,e^{\lambda\sqrt 3 \,x},\quad\quad
g_2(x,y)=\lambda\sqrt 3 \,n_2(x,y)\,e^{\lambda\sqrt 3 \,y},\quad\quad (x,y)\in\partial\om$$
where $n_i$, $i=1,2$ are the component of the  normal unit vector to $\partial\om$.

Now, by denoting with $\Gamma_1(\lambda)$, $\Gamma_2(\lambda)$ the average measurements \eqref{avemeas} with data $g_1$, $g_2$, and with $w_{\eps}=u_{\eps}-\lambda$ where $u_{\eps}$ solves \eqref{probeps} with $f=\lambda^3$, we obtain from \eqref{useform} (with $N=2$, $m=1$)
\begin{equation}
\label{eqposiz}
\Gamma_1(\lambda)=\eps^2\lambda^3\,e^{\lambda\sqrt 3 \,\bar x}+o(\eps^2),\quad\quad
\Gamma_2(\lambda)=\eps^2\lambda^3\,e^{\lambda\sqrt 3 \,\bar y}+o(\eps^2)
\end{equation}
By choosing a specific value of $\lambda$, the above relations can be used to approximate the position of the center of a small inhomogeneity.
The determination of the polarization tensor requires a non constant background potential $U$. In order to further simplify the problem, we assume that $\om=[0,1]\times[0,1]$ and try to identify the single element $M_{11}$ of the ($2\times 2$) polarization matrix (we also assume that $k$ is known). By the geometry of the domain, we can take a background potential $U=U(x)$ independent of $y$,  provided that $U'(0)=U'(1)=0$. Hence, we look for an auxiliary function $W=W(x)$ which solves the linear ordinary equation of the second order
\begin{equation}
\nonumber
-W''(x)+3U(x)^2 W(x)=0
\end{equation}
By looking for a solution in the form
$$W(x)=e^{\varphi(x)}$$
we find that the function $\varphi$ satisfies the equation
$$\varphi''(x)+\varphi'(x)^2=3U(x)^2$$
By the substitution $\psi(x)=\varphi'(x)$ we are reduced to a first order \emph{Riccati equation}
\begin{equation}
\label{riccati}
\psi'(x)+\psi(x)^2=3U(x)^2
\end{equation}
In general, there are no explicit solutions of such equation for a given $U$; on the other hand, there are large families of functions $\psi$ such that the left hand side of  \eqref{riccati} is a positive function with vanishing derivative at $x=0$ and $x=1$. Thus, for any such $\psi$, the function $U$ \emph{defined} (except for the sign) by \eqref{riccati} is an admissible background potential.
For example, a straightforward calculation shows that
$$\psi(x)=\frac{1}{3}(x^2+x-3)$$
solves \eqref{riccati} with $U(x)$ smooth function in $[0,1]$ satisfying homogeneous Neumann conditions (it can be easily seen that no linear $\psi$ can generate an admissible non constant potential). Then, if we have previously detected the position $(\bar x,\bar y)$, the matrix element $M_{11}(\bar x,\bar y)$ can be approximated as follows:

insert at the right hand side of \eqref{useform} the values $U(\bar x)$, $U'(\bar x)$ calculated with the above potential, together with $W(\bar x)=e^{\int\psi\,(\bar x)}$, $W'(\bar x)=\psi(\bar x)e^{\int\psi\,(\bar x)}$; note that by an appropriate choice of the integration constant we may take $e^{\int\psi\,(\bar x)}=1$. Then,
put at the left hand side of \eqref{useform} the average measurement \eqref{avemeas} with $g$ the Neumann datum of $W(x)$ and $w_{\eps}=u_{\eps}-U$, $u_{\eps}=u_{\eps}(x,y)$ being the solution of \eqref{probeps} with $f(x,y)=-U''(x)+U(x)^3$.
\begin{remark}
It may be interesting to compare the above discussion to the detection of one small inhomogeneity for the linear problem in [Ammari-Moskow-Vogelius]. We stress that the reconstruction algorithm for the non linear problem, though more difficult from a computational point of view, allows to detect the position of the inhomogeneity (by using a constant background potential) independently of the polarization tensor. However, it is not clear if it is possible to perform an efficient localization of many separated inhomogeneities.
\end{remark}

\section{Appendix: Poincar\'e inequalities}

There are different versions of inequalities which are usually known as Poincar\'e inequalities. Essentially, they relate the $L^2$ norm of the fluctuation of a function to the $L^2$ norm of its gradient. In this paper we use the following special case of the inequality proved in \cite{liebloss}, Theorem 8.11 :

\begin{theorem}
\label{poincare}
Let $g$ be a function in $L^2(\om)$  such that $\int_{\om}g=1$. Then, there is $S>0$ which depends on $\om$, $g$, such that for any $u\in H^1(\om)$
\begin{equation}
\label{liebloss}
\Big \|u-\int_{\om}ug\Big \|_{L^2(\om)}\le S \|\nabla u\|_{L^2(\om)}
\end{equation}
\end{theorem}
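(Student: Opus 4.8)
The plan is to derive \eqref{liebloss} from the classical Poincar\'e--Wirtinger inequality for the mean, namely $\|u-\bar u\|_{L^2(\om)}\le C_P\|\nabla u\|_{L^2(\om)}$ for all $u\in H^1(\om)$, where $\bar u:=|\om|^{-1}\int_\om u\,dx$ and $C_P=C_P(\om)$; this is legitimate because $\om$ is a bounded, connected, smooth domain. Granting it, the first step is the elementary identity
\[
u-\int_\om u\,g\,dx=(u-\bar u)-\int_\om(u-\bar u)\,g\,dx ,
\]
which uses only $\int_\om g\,dx=1$. The last term is a constant, so its $L^2(\om)$ norm equals $|\om|^{1/2}\big|\int_\om(u-\bar u)g\,dx\big|\le|\om|^{1/2}\|g\|_{L^2(\om)}\|u-\bar u\|_{L^2(\om)}$ by Cauchy--Schwarz; combining this with the triangle inequality and Poincar\'e--Wirtinger gives
\[
\Big\|u-\int_\om u\,g\,dx\Big\|_{L^2(\om)}\le\big(1+|\om|^{1/2}\|g\|_{L^2(\om)}\big)\|u-\bar u\|_{L^2(\om)}\le C_P\big(1+|\om|^{1/2}\|g\|_{L^2(\om)}\big)\|\nabla u\|_{L^2(\om)} ,
\]
so \eqref{liebloss} holds with $S=C_P\big(1+|\om|^{1/2}\|g\|_{L^2(\om)}\big)$, which depends only on $\om$ and on $g$ (through $\|g\|_{L^2(\om)}$).

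For completeness I would also recall why Poincar\'e--Wirtinger holds, by a contradiction/compactness argument: if it failed there would be $u_n\in H^1(\om)$ with $\|u_n-\bar u_n\|_{L^2(\om)}>n\|\nabla u_n\|_{L^2(\om)}$, and normalizing $v_n:=(u_n-\bar u_n)/\|u_n-\bar u_n\|_{L^2(\om)}$ yields $\int_\om v_n\,dx=0$, $\|v_n\|_{L^2(\om)}=1$ and $\|\nabla v_n\|_{L^2(\om)}<1/n$. Then $(v_n)$ is bounded in $H^1(\om)$; by Rellich--Kondrachov a subsequence converges in $L^2(\om)$ and weakly in $H^1(\om)$ to some $v$ with $\nabla v=0$, hence $v$ constant (here connectedness of $\om$ is used), while $\|v\|_{L^2(\om)}=1$ together with $\int_\om v\,dx=0$ is a contradiction. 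Alternatively, one can run exactly this compactness argument directly on \eqref{liebloss}, replacing the mean-zero constraint by $\int_\om v_n\,g\,dx=0$ and using $g\in L^2(\om)$ to pass to the limit in it; this avoids invoking Poincar\'e--Wirtinger as a black box.

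I do not expect a serious obstacle here: the only delicate points are the two implicit hypotheses, that $\om$ be connected (otherwise $\nabla v=0$ does not give $v$ constant and the estimate is false) and that $H^1(\om)\hookrightarrow L^2(\om)$ be compact (which needs some regularity of $\partial\om$; Lipschitz suffices, and here $\partial\om$ is smooth). Both hold in the present setting. It is worth noting that the constant $S$ depends on $g$ only via $\|g\|_{L^2(\om)}$, which is precisely what later makes it possible to keep the Poincar\'e constants uniform as $|\ome|\to0$, whenever the relevant weights $g$ stay bounded in $L^2(\om)$.
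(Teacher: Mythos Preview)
Your proof is correct. The paper does not give a detailed argument; it simply records that the proof follows a classical \emph{reductio ad absurdum} relying on compactness and refers to Lieb--Loss, Theorem~8.11. Your alternative route---reducing \eqref{liebloss} to the standard Poincar\'e--Wirtinger inequality through the identity $u-\int_\om ug=(u-\bar u)-\int_\om(u-\bar u)\,g$ and a Cauchy--Schwarz bound on the constant correction---is different in spirit and has the advantage of producing the explicit constant $S=C_P(\om)\big(1+|\om|^{1/2}\|g\|_{L^2(\om)}\big)$. This makes transparent the fact, which the paper only states after the theorem, that $S$ depends on $g$ solely through $\|g\|_{L^2(\om)}$; that is exactly what is needed to keep the constants uniform when $g=|\om\setminus\ome|^{-1}\mathbf{1}_{\om\setminus\ome}$ or $g$ is the normalized weight $q_\eps$, since these stay bounded in $L^2(\om)$ as $|\ome|\to 0$. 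The direct compactness argument you sketch at the end coincides with the paper's indicated proof.
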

The proof follows a classical {\it reductio ad absurdum} argument relying on compactness.

If we now choose $u\equiv u_{\eps}$,
\begin{equation}
\label{defg}
g=|\om\backslash\ome|^{-1}\mathbf{1}_{\om\backslash\ome}
\end{equation}
and put
$$\bar u_{\eps}=|\om\backslash\ome|^{-1}\int_{\om\backslash\ome}u_{\eps}$$
we obtain
\begin{equation}
\label{stimpoinc}
\|u_{\eps}\|^2_{L^2(\om)}\le 2\big (\|u_{\eps}-\bar u_{\eps}\|^2_{L^2(\om)}+|\om|\bar u_{\eps}^2\big )\le
2S^2 \|\nabla u_{\eps}\|^2_{L^2(\om)}+2\frac{|\om|}{|\om\backslash\ome|}\|u_{\eps}\|^2_{L^2(\om\backslash\ome)}
\end{equation}
By this estimate it follows easily
\begin{equation}
\label{poinC}
\|u_{\eps}\|^2_{H^1(\om)}\le \frac{1}{C}\Big (\|\nabla u_{\eps}\|^2_{L^2(\om)}+ \|u_{\eps}\|^2_{L^2(\om\setminus\ome)}\Big )
\end{equation}
which was used in theorem \ref{exist} and in proposition \ref{mainest}. Since the functions \eqref{defg} are uniformly bounded for $\eps\to 0$, one can show that the costant $S$ can be chosen independent of $\eps$; thus, by  \eqref{stimpoinc}, we can also take $C$ independent of $\eps$ in \eqref{poinC}.

Finally, by choosing  $u\equiv{w}_{\eps}=u_{\eps}-U $,
$$g=\Big (\int_{\om\setminus\ome} q_\eps\Big )^{-1} \,q_{\eps}\,\mathbf{1}_{\om\backslash\ome}$$
(where $q_{\eps}=U^2+U u_{\eps}+u^2_{\eps}$)
and by recalling \eqref{defwtildea}, we readily see that \eqref{liebloss} is equivalent to the estimate \eqref{poincarwtilde}.

\end{document}